\newtheorem{theorem}{Theorem}[section]
\newtheorem{corollary}[theorem]{Corollary}
\newtheorem{lemma}[theorem]{Lemma}
\newtheorem{definition1}[theorem]{Definition}
\newtheorem{remark1}[theorem]{Remark}
\newenvironment{remark}{\begin{remark1}\rm}{\hfill$\square$\end{remark1}}
\newcommand{\x}{\mathbf{x}}
\newcommand{\y}{\mathbf{y}}
\newcommand{\z}{\mathbf{z}}
\newcommand{\vv}{\mathbf{v}}
\newcommand{\w}{\mathbf{w}}
\newcommand{\0}{\mathbf{0}}
\title{Triple collisions on a comb graph}
\author{David A. Croydon\footnote{Research Institute for Mathematical Sciences, Kyoto University, croydon@kurims.kyoto-u.ac.jp} and Umberto De Ambroggio\footnote{Department of Mathematics, National University of Singapore, umberto@nus.edu.sg}}
\begin{document}

\maketitle

\begin{abstract}
In this article, we consider the number of collisions of \textit{three} independent simple random walks on a subgraph of the two-dimensional square lattice obtained by removing all horizontal edges with vertical coordinate not equal to 0 and then, for $n\in \mathbb{Z}$, restricting the vertical segment of the graph located at horizontal coordinate $n$ to the interval $\{0,1,\dots,\log^{\alpha}(|n|\vee 1)\}$. Specifically, we show the following phase transition: when $\alpha\leq 1$, the three random walks collide infinitely many times almost-surely, whereas when $\alpha>1$, they collide only finitely many times almost-surely. This is a variation of a result of Barlow, Peres and Sousi, who showed a similar phase transition for \textit{two} random walks when the vertical segments are truncated at height $|n|^{\alpha}$.\\
\textbf{MSC:} 60J10 (primary), 05C81, 60J35.\\
\textbf{Keywords:} simple random walk, comb graph, collisions, heat kernel estimates.
\end{abstract}

\section{Introduction}

A given number $k\in \mathbb{N}\coloneqq \{1,2,\dots,\}$ of discrete-time random processes $X^1=(X^1_n)_{n\geq 0},\dots, X^k=(X^k_n)_{n\geq 0}$ on a graph $\mathbb{G}=(V,E)$ are said to \textit{collide} at time $n\in \mathbb{N}_0\coloneqq \mathbb{N}\cup \{0\}$ if $X^1,\dots,X^k$ are at the same vertex at time $n$, i.e. 
\[X^1_n=\dots=X^k_n\in V.\]
The problem of understanding \textit{random walk} collisions in particular, where each $X^i$ is a simple random walk on a connected, locally finite graph $\mathbb{G}$, has attracted considerable interest in recent years. This follows the surprising result of Krishnapur and Peres \cite{KP}, which provided an example of a \textit{recurrent} graph where two independent simple random walks collide only \textit{finitely} many times, almost-surely; this is in contrast with the almost-sure infinitely many collisions property of two independent simple random walks on recurrent transitive graphs. More specifically, Krishnapur and Peres \cite{KP} considered the graph $\text{Comb}(\mathbb{G},\mathbb{Z})$, constructed by attaching a copy of $\mathbb{Z}$ to each vertex of a recurrent infinite graph $\mathbb{G}$ with constant vertex degree and showed that, within this deterministic geometry, two independent simple random walks meet finitely many times with probability one. For example, the latter result holds for $\text{Comb}(\mathbb{Z},\mathbb{Z})$, which informally speaking is just $\mathbb{Z}^2$ with all horizontal edges removed with the exception of those whose vertical coordinate is equal to 0.

Subsequently, Barlow, Peres and Sousi \cite{BPS} gave a simple criterion in terms of the Green's function for two random walks to have the infinite collision property. As a particular example, they investigated the number of collisions on the subgraph $\text{Comb}_{p}(\mathbb{Z},\alpha)$ of $\text{Comb}(\mathbb{Z},\mathbb{Z})$ obtained by retaining those vertices $(n,x)$ with $n\in \mathbb{Z}$, $0 \leq x\leq |n|^{\alpha}$ (the letter `$p$' stands for `polynomial', referring to the height at which we truncate the vertical segments, also called the teeth). They showed that two independent simple random walks on $\text{Comb}_{p}(\mathbb{Z},\alpha)$ collide infinitely many times with probability one if $\alpha\leq 1$, whereas they collide only finitely often almost-surely when $\alpha>1$. Intuitively, the shorter the teeth ($\alpha\leq 1)$, the easier it is for the two random walks to collide (or alternatively, longer teeth make it harder for the walkers to meet). This result was refined by Chen and Chen in \cite{Chenchen}. Specifically, the latter pair showed that, on the subgraph $\text{Comb}(\mathbb{Z},f)$ of $\text{Comb}(\mathbb{Z},\mathbb{Z})$ with vertex set $\{(n,x):n\in \mathbb{Z}, -f(n)\leq x\leq f(n)\}$, where
\[\sum_{n\in \mathbb{N}}\frac{1}{1\vee \max_{-n\leq i\leq n}f(i)}=\infty,\]
two independent simple random walks collide infinitely many times with probability one.

As noted in both \cite{BPS} and \cite{Chenchen}, it is natural to consider what the corresponding result might be for \textit{three} independent simple random walks. As a step in this direction, Chen and Chen demonstrated in \cite{Chenchen} that if $(f(n))_{n\in \mathbb{Z}}$ is a typical realization of a sequence of independent and identically distributed random variables with law $\mu$ supported on $\mathbb{N}_0$ and with finite mean, then almost-surely three independent simple random walks collide infinitely many times with probability one on $\text{Comb}(\mathbb{Z},f)$.

In this work, in the spirit of the result of Barlow, Peres and Sousi \cite{BPS}, we seek to identify the growth rate of a wedge comb that separates the regime in which three random walks collide infinitely often, and the regime in which they do not. (A priori, since we do not have a simple monotonicity property for the infinite collision property, cf. \cite[Question 7]{BPS}, it is not clear that a simple phase transition exists.) To state our result, we introduce the subgraph $\text{Comb}_{\ell}(\mathbb{Z},\alpha)$ (the `$\ell$' here stands for `logarithmic') of $\text{Comb}(\mathbb{Z},\mathbb{Z})$ obtained by retaining those vertices $(n,x)$ with $n\in \mathbb{Z}$, $0\leq x\leq 0\vee\log^{\alpha}(|n|)$. Let $X$, $Y$ and $Z$ be independent simple random walks on $\text{Comb}_{\ell}(\mathbb{Z},\alpha)$, started at the origin vertex $\0\coloneqq (0,0)$. Our goal is to show that: 
\begin{itemize}
    \item if $\alpha> 1$, then the three random walks meet only finitely many times with probability one;
    \item if $\alpha\leq 1$, then the three random walks meet infinitely often with probability one
\end{itemize}
Specifically, our main result is the following.

\begin{theorem}\label{mainthm}
    Let $(X_n)_{n\geq0}, (Y_n)_{n\geq0}$ and $(Z_n)_{n\geq0}$ be independent simple random walks on the graph $\mathrm{Comb}_{\ell}(\mathbb{Z},\alpha)$ defined above. Denoting by $\mathbb{P}^{\mathbf{x},\mathbf{y},\mathbf{z}}(\cdot)$ the law of $(X,Y,Z)$ given that $X_0=\mathbf{x}$, $Y_0=\mathbf{y}$, $Z_0=\mathbf{z}$ for $\mathbf{x},\mathbf{y},\mathbf{z}\in \mathrm{Comb}_{\ell}(\mathbb{Z},\alpha)$, we have:
    \begin{enumerate}
        \item [(a)] $\mathbb{P}^{\0,\0,\0}(|\{n\in \mathbb{N}_0:X_n=Y_n=Z_n\}|<\infty )=1$ if $\alpha> 1$;
        \item [(b)] $\mathbb{P}^{\0,\0,\0}(|\{n\in \mathbb{N}_0:X_n=Y_n=Z_n\}|=\infty)=1$ if $\alpha\leq 1$.
    \end{enumerate}
\end{theorem}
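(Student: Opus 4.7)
The plan is to run a standard first- and second-moment argument on the collision counter $C_n := |\{0\leq k\leq n : X_k=Y_k=Z_k\}|$. Writing $p_k$ for the one-step heat kernel of simple random walk on $\mathrm{Comb}_\ell(\mathbb{Z},\alpha)$ and using independence of $X,Y,Z$, one has
\[
\mathbb{E}^{\0,\0,\0}[C_n] \;=\; \sum_{k=0}^n \sum_{v} p_k(\0,v)^3.
\]
Part (a) will then follow from the first Borel-Cantelli lemma once this series is shown to be summable when $\alpha>1$. For part (b), I would combine divergence of the series with a matching upper bound $\mathbb{E}^{\0,\0,\0}[C_n^2]\lesssim (\mathbb{E}^{\0,\0,\0}[C_n])^2$ via Paley-Zygmund to deduce $\mathbb{P}^{\0,\0,\0}(C_\infty=\infty)>0$, and upgrade to probability one via a standard $0$-$1$ argument for the product chain $(X,Y,Z)$ on $\mathrm{Comb}_\ell(\mathbb{Z},\alpha)^3$ (e.g.\ by Hewitt--Savage applied to the underlying driving noise, or a direct restart argument using that three walks started from a common vertex have a positive chance of re-coinciding at the very next step).

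The real technical work is a sharp heat-kernel estimate for a single random walk, which I would obtain via the volume-and-effective-resistance framework of Barlow--Kumagai. The geometric inputs for $\mathrm{Comb}_\ell(\mathbb{Z},\alpha)$ are readily computed: the volume of the spine-radius-$r$ ball about $\0$ is
\[
V(r)\,\asymp\,\sum_{|m|\leq r}\bigl(1\vee(\log|m|)^\alpha\bigr)\,\asymp\,r(\log r)^\alpha,
\]
and the effective resistance from $\0$ to the boundary of this ball is $\asymp r$, since the teeth are dangling subtrees carrying no current through any electrical flow from $\0$ to infinity along the spine. The standard volume--resistance scaling $V(r)R(r)\asymp n$ identifies the diffusive scale $r(n)\asymp \sqrt{n/(\log n)^\alpha}$ and yields the on-diagonal estimate $p_{2n}(\0,\0)\asymp 1/\bigl(\sqrt{n}\,(\log n)^{\alpha/2}\bigr)$ together with near-diagonal pointwise lower bounds and sub-Gaussian off-diagonal upper bounds. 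Since at time $n$ the walker typically sits in a ball of volume $V(r(n))\asymp\sqrt{n(\log n)^\alpha}$ on which $p_n(\0,\cdot)\asymp 1/V(r(n))$, one obtains
\[
\sum_v p_n(\0,v)^3 \,\asymp\, \frac{1}{V(r(n))^2} \,\asymp\, \frac{1}{n\,(\log n)^\alpha},
\]
so $\sum_n \mathbb{P}(X_n=Y_n=Z_n)$ converges iff $\alpha>1$, realising the phase transition already at the level of the first moment.

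For the second moment, the Markov property gives
\[
\mathbb{E}^{\0,\0,\0}[C_n^2] \,\leq\, 2\sum_{0\leq j\leq k\leq n}\sum_{v,w} p_j(\0,v)^3\,p_{k-j}(v,w)^3,
\]
and uniform-in-$v$ versions of the heat-kernel estimates above reduce $\sum_w p_l(v,w)^3$ to the same order $1/(l(\log l)^\alpha)$, with the polylog corrections incurred when $v$ lies deep inside a tooth (one loses $O((\log n)^{2\alpha})$ steps waiting to hit the spine) absorbed without affecting the summability threshold. This yields $\mathbb{E}^{\0,\0,\0}[C_n^2]\lesssim (\mathbb{E}^{\0,\0,\0}[C_n])^2$ and feeds into Paley-Zygmund as described. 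I expect the principal obstacle to be precisely this uniformity-in-the-starting-point step: because the teeth heights depend on the horizontal coordinate, the graph fails to be translation invariant, and volume doubling and Poincaré-type inequalities hold only on scales comparable to $|m|$ when viewed from a vertex of horizontal coordinate $m$; one must verify that the diffusive scale $r(n)$ stays within the regime where they apply, and that the bounds on off-spine starting points degrade by at most logarithmic factors. Once the heat-kernel estimates are established with this level of uniformity, the remainder of the argument is standard.
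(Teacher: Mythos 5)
Your treatment of part (a) is essentially the paper's: bounding $\mathbb{P}^{\0,\0,\0}(X_n=Y_n=Z_n)$ by a power of the on-diagonal heat kernel and showing $\sum_n\sup_{\x}p_{2\lfloor n/2\rfloor}(\x,\x)^2\lesssim\sum_n n^{-1}\log^{-\alpha}(n)<\infty$ for $\alpha>1$ is exactly Lemmas \ref{l21} and \ref{hku1}, and your geometric inputs (volume of a radius-$r$ ball $\asymp r\log^\alpha(r)$, effective resistance $\asymp r$ because the teeth are dangling subtrees) are correct.

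The gap is in part (b), and it is not primarily the uniformity-of-heat-kernel issue you flag (which is real but surmountable, and is what Section \ref{heatkernelestimates} of the paper handles); it is the upgrade from $\mathbb{P}^{\0,\0,\0}\left(\left|\{n:X_n=Y_n=Z_n\}\right|=\infty\right)>0$ to probability one. Neither of your two suggested routes works. Hewitt--Savage does not apply: on a comb the position at time $n$ is not a symmetric function of the i.i.d.\ driving variables (unlike a walk on a group, where $S_n=\sum_{i\leq n}\xi_i$ is unchanged for $n\geq k$ by permuting $\xi_1,\dots,\xi_k$), so the infinite-collision event is not exchangeable. The restart argument also fails: a uniform lower bound $\inf_{\vv}\mathbb{P}^{\vv,\vv,\vv}(\text{meet again})\geq p_*>0$ only yields $\mathbb{P}(\text{at least }k\text{ collisions})\geq p_*^k$, which is useless; to conclude you would need this infimum to equal $1$, which is essentially the theorem itself. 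Worse, from a generic non-coincident configuration at horizontal scale $N$ the probability of ever meeting before leaving a comparable box is only of order $1/L(N)$, which tends to $0$, so a single-scale estimate cannot close a L\'evy $0$--$1$ law argument either. What the paper does instead --- and what your outline is missing --- is a multi-scale iteration: it proves $\mathbb{P}^{\x,\y,\z}(\exists\, n<\theta_{hN}:X_n=Y_n=Z_n)\geq c/L(N)$ \emph{uniformly over all starting triples} $\x,\y,\z\in\mathbb{V}_N$ at even mutual distances, via a second-moment argument using lower bounds on the \emph{killed} heat kernel $p^{\mathbb{V}_{hN}}_n$ over the time window $[c_1N^2\log^\alpha(N),c_2N^2\log^\alpha(N)]$ and a target annulus $\mathbb{V}_N\setminus\mathbb{V}_{N/2}$; it then applies the strong Markov property at the exit times $\theta_{h^m}$ together with the conditional Borel--Cantelli lemma, using that $\sum_m 1/L(h^m)\asymp\sum_m 1/(m\log m)=\infty$. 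Without this iteration over scales, your argument establishes only that the three walks collide infinitely often with positive probability.
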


Establishing part (a) is quite straightforward. In particular, relatively standard heat kernel (transition density) estimates yield that, when $\alpha>1$, the expected number of collisions between three independent random walks on $\text{Comb}_{\ell}(\mathbb{Z},\alpha)$ is finite. Clearly, the result follows from this. We note that it was commented in \cite[Question 1]{BPS} that a similar argument applies to the graph $\text{Comb}_{p}(\mathbb{Z},\alpha)$ for any $\alpha>0$. As we observe below (see Remark \ref{4collisionrem}), the same argument can be used to show that four (or more) random walks collide finitely often on any infinite, connected graph of bounded degree.

To establish (b), we will use a second moment method argument to bound from below the probability of seeing a triple collision prior to the first time  $\theta_N$ at which one of the walkers leaves a given `box' (more precisely, a specific subgraph) in the comb with diameter of order $N$. In particular, for $\alpha\leq 1$, we show that the three random walkers, started at arbitrary sites in the box, collide before time  $\theta_N$ with probability of order at least $1/ \log(N)\log\log(N)$. Whilst this is already enough to establish the infinite collision property, subsequently we show (again by means of a second moment argument) that, if the three walkers meet before time $\theta_N$, they go on to meet at least $c(\log^{1-\alpha}(N)\vee \log\log(N))$ times before $\theta_N$, with constant positive probability. Combining these two estimates we arrive at the conclusion that the three random walkers meet at least $c(\log^{1-\alpha}(N)\vee \log\log(N))$ times before $\theta_N$ with probability of order at least $1/\log(N)\log\log(N)$. By means of the strong Markov property and a generalised Borel-Cantelli argument, we use this lower bound to establish that the two walkers collide with the aforementioned frequency in infinitely many of these (growing) boxes; more precisely, our argument yields that if $C_N$ is the number of collisions up to time $N$, then the following result holds. Clearly, it implies that infinitely many collisions between the walkers occur. 

\begin{theorem}\label{thmgrowth} If $\alpha\leq 1$, then it $\mathbb{P}^{\0,\0,\0}$-almost-surely holds that
\[\limsup_{n\rightarrow\infty}\frac{C_N}{\log^{1-\alpha}(N)\vee \log\log(N)}>0.\]
\end{theorem}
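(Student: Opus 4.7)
The plan is to lift the single-scale lower bound highlighted in the preceding discussion into an almost-sure statement via a L\'evy-type conditional Borel--Cantelli argument run along a geometric sequence of scales. Fix $N_k \coloneqq 2^k$ and inductively define stopping times by $\tau_0 \coloneqq 0$ and, for $k\geq 1$, $\tau_k$ to be the first time after $\tau_{k-1}$ at which one of $X,Y,Z$ leaves a box of scale $N_k$ placed around the configuration $(X_{\tau_{k-1}},Y_{\tau_{k-1}},Z_{\tau_{k-1}})$. Let $A_k$ denote the event that at least $c\bigl(\log^{1-\alpha}(N_k)\vee \log\log(N_k)\bigr)$ triple collisions occur during $(\tau_{k-1},\tau_k]$. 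Provided the single-box lower bound described above holds uniformly in the starting configuration of the walkers inside the box (see below), the strong Markov property at $\tau_{k-1}$ yields
\[\mathbb{P}^{\0,\0,\0}\bigl( A_k \,\big|\, \mathcal{F}_{\tau_{k-1}}\bigr) \;\geq\; \frac{c}{\log(N_k)\log\log(N_k)} \;\asymp\; \frac{1}{k\log k}\]
for some constant $c>0$ independent of $k$.

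Since $\sum_k 1/(k\log k) = \infty$, the conditional (L\'evy) Borel--Cantelli lemma then gives $\mathbb{P}^{\0,\0,\0}(A_k \text{ infinitely often})=1$. On each $A_k$, by definition $C_{\tau_k} \geq c\bigl(\log^{1-\alpha}(N_k)\vee \log\log(N_k)\bigr)$. To convert this into a statement at deterministic times, I will use that on $\text{Comb}_{\ell}(\mathbb{Z},\alpha)$ the exit time from a box of scale $N_k$ grows only polynomially in $N_k$: a routine Borel--Cantelli argument based on standard second-moment estimates for the exit time produces a deterministic constant $A$ such that $\tau_k \leq N_k^{A}$ for all sufficiently large $k$, $\mathbb{P}^{\0,\0,\0}$-almost-surely. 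This yields $\log\tau_k \asymp \log N_k$, and hence $\log^{1-\alpha}(N_k)\vee \log\log(N_k) \asymp \log^{1-\alpha}(\tau_k) \vee \log\log(\tau_k)$, so evaluating at $N=\tau_k$ along the subsequence on which $A_k$ holds delivers the required $\limsup$ bound.

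The principal obstacle is obtaining the uniform-in-starting-configuration version of the single-box lower bound: the discussion preceding the theorem asserts the estimate for walkers started at $\0$, whereas the Borel--Cantelli scheme requires it regardless of where the walkers sit at $\tau_{k-1}$ (which will typically be on the boundary of the previously exited box). I expect this to follow by rerunning the first- and second-moment arguments underlying the $1/(\log N\log\log N)$ lower bound from arbitrary starting triples in the relevant box: since those arguments are driven by heat kernel estimates which are insensitive to the starting point up to multiplicative constants (once a short burn-in relative to $\theta_N$ has elapsed), the loss should be absorbable into the constant $c$. The polynomial upper bound on $\tau_k$ is a routine side calculation that I do not expect to present difficulty.
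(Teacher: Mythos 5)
Your proposal follows essentially the same route as the paper: a uniform-in-starting-configuration lower bound of order $1/(\log N\log\log N)$ for accumulating $\gtrsim \log^{1-\alpha}(N)\vee\log\log(N)$ collisions before exiting a box of scale $N$ (obtained, as you anticipate, by rerunning the first- and second-moment arguments from arbitrary starting triples and chaining the first-meeting estimate with the subsequent-meetings count via the strong Markov property), followed by a conditional Borel--Cantelli argument along geometrically growing scales and a polynomial almost-sure bound on the exit times (the paper gets $\theta_{hN}\leq N^4$ eventually via the commute time identity) to pass to deterministic times. The only cosmetic difference is that the paper works with the fixed origin-centred boxes $\mathbb{V}_{h^m}$ rather than boxes recentred at the current configuration, which keeps the heat kernel estimates (stated for points at horizontal distance comparable to $N$ from the origin) directly applicable.
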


\begin{remark}
In the course of the proof of the above theorem, we establish that $\log^{1-\alpha}(N)\vee \log\log(N)$ also gives an upper bound on the expected number of triple collisions at vertices with horizontal coordinate in $[N/2,2N]$ up to time $\theta_N \wedge cN^2\log^\alpha(N)$. (We will additionally explain that $\theta_N$ is typically of order $N^2\log^\alpha(N)$.) One is thus led to consider what more detailed statements about the collision growth rate are available. What is the correct scaling to see a distributional limit for $(C_N)_{N\geq 1}$, if such exists? On what scale are there almost-sure fluctuations? We leave these as open questions.
\end{remark}

\paragraph{Related work.} 
We summarise here some other recent work on random walk collisions. This is only meant to give a flavour of the activity in this area, and will not be fully comprehensive; please see the references within the cited works for further developments. The result of Barlow, Peres and Sousi \cite{BPS} that we mentioned earlier concerning the phase transition in the number of collisions of two independent random walks on $\text{Comb}_{p}(\mathbb{Z},\mathbb{Z})$ is just one of the several results established in that paper. In particular, the authors of \cite{BPS} considered the double collision property on other discrete (possibly random) geometries. For instance, they established, by means of the general criterion mentioned earlier, that each one of the following graphs has the double infinite collision property: (i) critical Galton Watson trees with finite variance conditioned to survive forever; (ii) the incipient infinite cluster of critical percolation in dimension $d \geq 19$; (iii) the uniform spanning tree in $\mathbb{Z}^2$. They also established a phase transition for the number of double collisions on a class of spherically symmetric trees. Another general result, showing the infinite double collision property for recurrent reversible random rooted graphs was established by Hutchcroft and Peres in \cite{HP}; this covers the uniform infinite planar triangulation and quadrangulation, and the incipient infinite cluster of critical percolation on $\mathbb{Z}^2$. Moreover, Halberstam and Hutchcroft \cite{HH} studied the collision problem in a dynamic random conductance model in $\mathbb{Z}^2$, and established conditions under which two random walks collide infinitely often almost-surely (their result applies to random walks on dynamical percolation). Concerning more details about collisions of random walks on $\mathbb{Z}$, Nguyen \cite{Nguyen} provides a scaling limit for the collision measure. Finally, we note that, in contrast to the case of simple random walks on unweighted graphs, it is shown in \cite{DGP1,DGP2} that arbitrary many walkers can meet infinitely often in certain random media. In particular, in the latter work, the authors considered $d$ independent walkers in the same random environment on $\mathbb{Z}$, and, under the assumption that the law of the environment is such that a single walker is transient to the right but sub-ballistic, they established that there are almost-surely infinitely many times for which all the $d$ random walkers are at the same location. 

\paragraph{Notation.} We denote $\mathbb{N}_0\coloneqq \mathbb{N}\cup \{0\}$, where $\mathbb{N}\coloneqq \{1,2,\dots\}$. Given vertices $\mathbf{x},\mathbf{y},\mathbf{z}$ in $\text{Comb}_{\ell}(\mathbb{Z},\alpha)$, we denote by $\mathbb{P}^{\mathbf{x},\mathbf{y},\mathbf{z}}(\cdot)$ the law of $(X,Y,Z)$ given that $X_0=\mathbf{x}$, $Y_0=\mathbf{y}$, $Z_0=\mathbf{z}$, where $X$, $Y$ and $Z$ are independent simple random walks on $\text{Comb}_{\ell}(\mathbb{Z},\alpha)$. Similarly, we write $\mathbb{P}^\mathbf{x}$ when referring to a single simple random walk on the comb. Constants $C,c,c_1,c_2,\dots$ take values in $(0,\infty)$ and, unless explicitly specified, may change from line to line. Moreover, we use the notation $x\vee y\coloneqq\max\{x,y\}$ and $x\wedge y\coloneqq\min\{x,y\}$. Finally, we sometimes use a continuous variable $x$ where a discrete argument is required with the understanding that this should be replaced by $\lfloor x\rfloor$ or $\lceil x\rceil$, as appropriate, and similarly, we sometimes write $x\in [a,b]$ to mean $x\in [a,b]\cap \mathbb{N}_0$.

\paragraph{Structure of the paper.} In Section \ref{finite}, using a heat kernel estimate, we show that, when $\alpha>1$, three independent random walks collide only finitely often with probability one, i.e.\ we establish Theorem \ref{mainthm}(a). This is followed in Section \ref{heatkernelestimates} by a presentation of further heat kernel estimates. These are applied in Section \ref{infinite} to show that, when $\alpha\leq 1$, three independent random walks collide infinitely often with probability one; specifically, we prove Theorem \ref{mainthm}(b) and Theorem \ref{thmgrowth}. We conclude with an appendix where we recall elementary results that are used throughout the article. 

\section{Finitely many triple collisions occur when $\alpha> 1$}\label{finite}

The goal here is to show that, when $\alpha> 1$, the number of \textit{triple} collisions 
\begin{equation*}
   \left|\{n\in \mathbb{N}_0:X_n=Y_n=Z_n\}\right|
\end{equation*}
is finite with probability one. As noted in the introduction, to establish this property, we show the expected number of triple collisions is finite by applying heat kernel estimates for the comb graph. We define the heat kernel (or, alternatively, the transition density) by setting
\[p_{n}(\mathbf{x},\mathbf{y})\coloneqq\frac{\mathbb{P}^\mathbf{x}\left(X_n=\mathbf{y}\right)}{\mathrm{deg}(\mathbf{y})},\]
where $\mathrm{deg}(\mathbf{y})$ is the usual graph degree of $\mathbf{y}$. We highlight that $1\leq \mathrm{deg}(\mathbf{y})\leq 3$ for all $\mathbf{y}$, and so the above transition density is comparable to the transition probability $\mathbb{P}^\mathbf{x}(X_n=\mathbf{y})$. We also note that, since $\mathrm{deg}(\mathbf{y})$ gives an invariant measure for the reversible Markov chain $X$, the heat kernel is symmetric, i.e.\
$p_{n}(\mathbf{x},\mathbf{y})=p_{n}(\mathbf{y},\mathbf{x})$. We further define
\begin{equation}\label{vdef}
B(\x,r)\coloneqq\left\{\mathbf{y}\in\text{Comb}_{\ell}(\mathbb{Z},\alpha):\:d(\x,\mathbf{y})\leq r\right\},\qquad V(\x,r)\coloneqq|B(\x,r)|,
\end{equation}
where $d(\mathbf{x},\mathbf{y})$ is the usual shortest path graph distance between $\mathbf{x}$ and $\mathbf{y}$. Our first result provides an estimate on the probability of a triple collision in terms of the transition density.

\begin{lemma}\label{l21} For every $n\geq 1$, it holds that
   \[ \mathbb{P}^{\mathbf{0},\mathbf{0},\mathbf{0}}\left(X_n=Y_n=Z_n\right)\leq9 \sup_{\x\in B(\0,n)}p_{2\lfloor{n/2}\rfloor}(\mathbf{x},\mathbf{x})^2.\]
\end{lemma}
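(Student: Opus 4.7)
The plan is to rewrite the triple collision probability as a single sum over possible meeting sites and then reduce off-diagonal heat kernel values to on-diagonal ones using reversibility.

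Because the three walks are independent and all start at $\mathbf{0}$, a direct conditioning on the common location gives
\[
\mathbb{P}^{\mathbf{0},\mathbf{0},\mathbf{0}}(X_n=Y_n=Z_n) \;=\; \sum_{\mathbf{x}} \bigl(\mathbb{P}^{\mathbf{0}}(X_n=\mathbf{x})\bigr)^3 \;=\; \sum_{\mathbf{x}} \mathrm{deg}(\mathbf{x})^3\, p_n(\mathbf{0},\mathbf{x})^3,
\]
where only $\mathbf{x}\in B(\mathbf{0},n)$ contribute. I would then pull two of the three factors out of the sum as a supremum:
\[
\sum_{\mathbf{x}} \mathrm{deg}(\mathbf{x})^3 p_n(\mathbf{0},\mathbf{x})^3 \;\leq\; \sup_{\mathbf{x}\in B(\mathbf{0},n)} \bigl(\mathrm{deg}(\mathbf{x})\, p_n(\mathbf{0},\mathbf{x})\bigr)^2 \,\sum_{\mathbf{x}} \mathrm{deg}(\mathbf{x})\,p_n(\mathbf{0},\mathbf{x}).
\]
The remaining sum equals $\sum_{\mathbf{x}} \mathbb{P}^{\mathbf{0}}(X_n=\mathbf{x}) = 1$, and the bound $\mathrm{deg}(\mathbf{x})\leq 3$ contributes the prefactor $9$. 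This leaves the task of bounding $\sup_{\mathbf{x}\in B(\mathbf{0},n)} p_n(\mathbf{0},\mathbf{x})^2$ by $\sup_{\mathbf{x}\in B(\mathbf{0},n)} p_{2\lfloor n/2\rfloor}(\mathbf{x},\mathbf{x})^2$.

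For this reduction I would split on parity. When $n=2k$ is even, Chapman--Kolmogorov combined with the Cauchy--Schwarz inequality for the inner product weighted by the reversible measure $\mathrm{deg}$ yields the standard bound
\[
p_{2k}(\mathbf{0},\mathbf{x}) = \sum_{\mathbf{y}} \mathrm{deg}(\mathbf{y})\, p_k(\mathbf{0},\mathbf{y})\,p_k(\mathbf{y},\mathbf{x}) \;\leq\; \sqrt{p_{2k}(\mathbf{0},\mathbf{0})\,p_{2k}(\mathbf{x},\mathbf{x})} \;\leq\; \sup_{\mathbf{z}\in B(\mathbf{0},n)} p_{n}(\mathbf{z},\mathbf{z}).
\]
When $n=2k+1$ is odd, the one-step Markov decomposition
\[
p_{2k+1}(\mathbf{0},\mathbf{x}) \;=\; \frac{1}{\mathrm{deg}(\mathbf{x})}\sum_{\mathbf{y}\sim \mathbf{x}} p_{2k}(\mathbf{0},\mathbf{y})
\]
reduces the odd case to the even one and gives $p_{2k+1}(\mathbf{0},\mathbf{x}) \leq \sup_{\mathbf{z}\in B(\mathbf{0},n)} p_{2k}(\mathbf{z},\mathbf{z})$. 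In either case we conclude $p_n(\mathbf{0},\mathbf{x})\leq \sup_{\mathbf{z}\in B(\mathbf{0},n)} p_{2\lfloor n/2\rfloor}(\mathbf{z},\mathbf{z})$, which combined with the first step delivers the claim.

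There is no real obstacle here: the argument amounts to the classical reversibility/Cauchy--Schwarz trick plus the trivial bound $\mathrm{deg}\leq 3$. The only mildly delicate point is the odd-$n$ case, which is handled by the one additional Markov step.
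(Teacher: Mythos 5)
Your proof is correct, and it takes a route that differs from the paper's in a small but genuine way. Both arguments start from $\mathbb{P}^{\0,\0,\0}(X_n=Y_n=Z_n)=\sum_{\x}\deg(\x)^3p_n(\0,\x)^3$ and both rest on the same key tool, the reversibility/Cauchy--Schwarz bound $p_{2k}(\0,\x)\le\sqrt{p_{2k}(\0,\0)\,p_{2k}(\x,\x)}$. The difference lies in how the cube is split. The paper extracts a single factor $\sup_{\x}p_n(\0,\x)$ and identifies the remaining sum exactly, $\sum_{\x}p_n(\0,\x)^2\deg(\x)=p_{2n}(\0,\0)$; to convert $p_{2n}(\0,\0)$ and $p_{2\lceil n/2\rceil}(\x,\x)$ into quantities at time $2\lfloor n/2\rfloor$ it must then invoke the monotonicity of $k\mapsto p_{2k}(\x,\x)$, for which it cites \cite{CHLLT}. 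You instead extract two factors as a supremum, so the leftover sum is simply $\sum_{\x}\deg(\x)p_n(\0,\x)=1$, and you dispose of the parity issue with a single Markov step, $p_{2k+1}(\0,\x)=\deg(\x)^{-1}\sum_{\y\sim\x}p_{2k}(\0,\y)\le\max_{\y\sim\x}p_{2k}(\0,\y)$, which reduces the odd case to the even one at time $2\lfloor n/2\rfloor$ directly. The net effect is that your argument requires no monotonicity input at all --- the only external facts used are Chapman--Kolmogorov, reversibility and $\deg\le3$ --- at the cost of not recording the identity $\sum_{\x}p_n(\0,\x)^2\deg(\x)=p_{2n}(\0,\0)$, which is conceptually tidy but not needed elsewhere. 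Both routes yield exactly the stated constant $9$.
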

\begin{proof}
Since we are dealing with nearest-neighbour random walks, $X_n\in B(\0,n)$ for each $n$ (and similarly for $Y_n,Z_n$). Hence, recalling that the degree of each vertex is at most $3$, a straightforward calculation gives
\begin{eqnarray*}
    \mathbb{P}^{\mathbf{0},\mathbf{0},\mathbf{0}}\left(X_n=Y_n=Z_n\right)&=&\sum_{\x\in B(\0,n)} p_n(\0,\x)^3\mathrm{deg}(\mathbf{x})^3\\
    &\leq & 9\sup_{\x\in B(\0,n)} p_n(\0,\x) \sum_{\x\in B(\0,n)} p_n(\0,\x)^2\mathrm{deg}(\mathbf{x})\\
    &=& 9\sup_{\x\in B(\0,n)} p_n(\0,\x) \times p_{2n}(\0,\0),
\end{eqnarray*}
where for the last equality we have used that
\begin{align*}
    \sum_{\x\in B(\0,n)} p_n(\0,\x)^2\mathrm{deg}(\mathbf{x})&=\sum_{\x\in B(\0,n)}\mathbb{P}^{\0}(X_n=x)\mathbb{P}^{\0}(X_n=x)\mathrm{deg}(\mathbf{x})^{-1}\\
    &=\sum_{\x\in B(\0,n)}\mathbb{P}^{\0}(X_n=\x)\mathbb{P}^{\x}(X_n=\0)\mathrm{deg}(\mathbf{0})^{-1}\\
    &=\mathrm{deg}(\mathbf{0})^{-1}\mathbb{P}^{\0}(X_{2n}=\0)\\
    &=p_{2n}(\0,\0).
\end{align*}
Now observe that 
\begin{align*}
    p_n(\0,\x)&=\deg(\x)^{-1}\sum_{\y}^{}\mathbb{P}^{\0}(X_n=x,X_{\lfloor{n/2}\rfloor}=y)\\
    &=\deg(\x)^{-1}\sum_{\y}^{}\mathbb{P}^{\0}(X_{\lfloor{n/2}\rfloor}=y)\mathbb{P}^{\y}(X_{\lceil{n/2}\rceil}=x)\\
    &=\sum_{\y}^{}p_{\lfloor{n/2}\rfloor}(\0,\mathbf{y})p_{\lceil{n/2}\rceil}(\mathbf{y},\x)\mathrm{deg}(\mathbf{y}),
\end{align*}
and so, by Cauchy-Schwarz, we further have that
\begin{align}\label{cs}
 \nonumber p_n(\0,\x)&=\sum_{\mathbf{y}}p_{\lfloor{n/2}\rfloor}(\0,\mathbf{y})p_{\lceil{n/2}\rceil}(\mathbf{y},\x)\mathrm{deg}(\mathbf{y})\\
 \nonumber&\leq \left(\sum_{\mathbf{y}}p_{\lfloor{n/2}\rfloor}(\0,\mathbf{y})p_{\lfloor{n/2}\rfloor}(\y,\mathbf{0})\mathrm{deg}(\mathbf{y})\right)^{1/2}\left(\sum_{\mathbf{y}}p_{\lceil{n/2}\rceil}(\mathbf{x},\y)p_{\lceil{n/2}\rceil}(\mathbf{y},\x)\deg(\y)\right)^{1/2}\\
&= \sqrt{p_{2\lfloor{n/2}\rfloor}(\0,\0)p_{2\lceil{n/2}\rceil}(\x,\x)}.
\end{align}
Thus, by the monotonicity of the on-diagonal part of the transition density (i.e.\ $p_{2(n+1)}(\x,\x)\leq p_{2n}(\x,\x)$; see \cite[(15)]{CHLLT}, for example), it follows that
\[\mathbb{P}^{\mathbf{0},\mathbf{0},\mathbf{0}}\left(X_n=Y_n=Z_n\right)\leq 9\sup_{\x\in B(\0,n)}\sqrt{p_{2\lfloor{n/2}\rfloor}(\0,\0)p_{2\lceil{n/2}\rceil}(\x,\x)}  \times p_{2n}(\0,\0) \leq 9\sup_{\x\in B(\0,n)}p_{2\lfloor{n/2}\rfloor}(\mathbf{x},\mathbf{x})^2, \]
as desired.
\end{proof}

We next proceed to give an estimate on the size of the upper bound in the previous lemma.

\begin{lemma}\label{hku1} There exists a constant $C$ such that, for every $n\geq 2$, 
   \[\sup_{\x\in B(\0,n)}p_{2\lfloor{n/2}\rfloor}(\mathbf{x},\mathbf{x})\leq \frac{C}{n^{1/2}\log^{\alpha/2}(n)}.\]
\end{lemma}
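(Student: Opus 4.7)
The plan is to derive the bound from a general heat-kernel upper bound for reversible random walks on graphs in terms of volume and effective resistance. Specifically, I would apply the standard estimate
$$
p_{2t}(\x,\x)\leq \frac{C}{V(\x,r)} \qquad \text{whenever} \qquad t\geq V(\x,r)\cdot R_{\mathrm{eff}}\bigl(\x,\partial B(\x,r)\bigr),
$$
which is a consequence of the Einstein relation for (strongly) recurrent graphs, and is presumably contained in the appendix. The task then reduces to making a good choice of $r$: large enough that $V(\x,r)$ is sizeable, but small enough that $V\cdot R_{\mathrm{eff}}$ remains below $t=\lfloor n/2\rfloor$.

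For the resistance side, since the comb admits no non-trivial parallel paths, the shortest routes from $\x$ to $\partial B(\x,r)$ go along the backbone, giving $R_{\mathrm{eff}}(\x,\partial B(\x,r))\leq r$ immediately. The balancing choice is therefore $r\coloneqq \lceil c\sqrt{n/\log^\alpha(n)}\rceil$ with a small enough constant $c$ to ensure $V\cdot R_{\mathrm{eff}}\leq t$ once the volume estimate is in place.

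The main work lies in proving the volume lower bound $V(\x,r)\geq c\,r\log^\alpha(n)$ for every $\x=(m,h)\in B(\0,n)$. I would count only vertices at horizontal coordinates $j$ with $|j-m|\leq r/2$; these all lie in $B(\x,r)$ because $r\gg \log^\alpha(n)\geq h$ for $n$ large. The analysis splits into two regimes: when $|m|\geq r$, every $j$ in the range satisfies $|j|\geq r/2$, so each tooth contributes $\geq \log^\alpha(r/2)$ vertices, which is $\geq c\log^\alpha(n)$ since $\log r = \tfrac{1}{2}\log n - \tfrac{\alpha}{2}\log\log n \asymp \log n$, and summing over $j$ yields the claim. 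When $|m|<r$, the interval $[m-r/2,m+r/2]$ still contains a sub-interval of length $\geq r/4$ on which $|j|\geq r/4$, and the same argument goes through. Putting the two estimates together gives $V(\x,r)\cdot R_{\mathrm{eff}}\leq C\,r^2\log^\alpha(n)\leq C' n$, so the cited bound yields
$$p_{2\lfloor n/2\rfloor}(\x,\x)\leq \frac{C}{V(\x,r)}\leq \frac{C'}{r\log^\alpha(n)}\leq \frac{C''}{n^{1/2}\log^{\alpha/2}(n)},$$
as required.

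The principal technical obstacle is the volume estimate for $\x$ close to the origin, since there the teeth immediately surrounding $\x$ can be genuinely short and a purely local calculation gives nothing. The remedy is to exploit that at the scale $r\asymp \sqrt{n}/\log^{\alpha/2}(n)$ the ball is large enough to reach horizontal coordinates of order $r$ away from the origin, where the teeth already achieve height $\asymp \log^\alpha(n)$; the key identity $\log r \asymp \log n$ is what makes this bookkeeping work at all scales of $|m|$ relevant here.
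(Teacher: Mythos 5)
Your argument follows essentially the same route as the paper: an on-diagonal volume--resistance bound (the paper uses the unconditional form $p_{2t}(\x,\x)\leq 4r/t+2/V(\x,r)$, which matches your criterion since on the comb the effective resistance to $\partial B(\x,r)$ is comparable to $r$) combined with the volume lower bound $V(\x,r)\geq c\,r\log^{\alpha}(n)$ at the scale $r\asymp\sqrt{n/\log^{\alpha}(n)}$, which the paper likewise obtains by counting teeth at horizontal distance of order $r$ from $\x$, where the key point $\log(r)\asymp\log(n)$ guarantees tooth height of order $\log^{\alpha}(n)$ even when $\x$ is near the origin. The only unstated ingredient is the matching volume upper bound $V(\x,r)\leq C r\log^{\alpha}(n)$ implicitly used to verify your side condition $t\geq V(\x,r)R_{\mathrm{eff}}$ (the paper's unconditional form of the on-diagonal bound avoids needing it), but this is immediate because $B(\x,r)$ meets at most $2r+1$ teeth, each containing at most $\log^{\alpha}(2n)$ vertices.
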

\begin{proof}
Using known connections between volume, resistance and transition density estimates, the key to the proof will be establishing a good volume lower bound. In particular, in the proof of \cite[Lemma 11]{CHLLT}, it is shown that
\begin{equation}\label{hkbound}
    p_{2\lfloor{n/2}\rfloor}(\mathbf{x},\mathbf{x})\leq \frac{4r}{\lfloor{n/2}\rfloor}+\frac{2}{V(\x,r)},
\end{equation}
for any $r\geq 1$, with $V(\x,r)$ being the volume of the ball with respect to the resistance metric of the graph in question. Since we are dealing with a graph tree, which means the resistance distance is identical to the graph distance $d$,
we can consider $V(\x,r)$ to be defined as at \eqref{vdef}. With a suitable lower bound on the volume in place, we will optimise over $r$ to obtain the result.

In the following, we will suppose $n_0$ is chosen suitably large so that $\frac14 n^{1/4}\geq \log^\alpha(n)$ for all $n\geq n_0$, and restrict ourselves to consideration of $n$ in this range. (Smaller values of $n$ can then be dealt with by simply adjusting $C$ as necessary.) Moreover, we will suppose $\x=(x,u)\in B(\0,n)$ is such that $x\geq0$ (the other case is dealt with similarly), and that $r\geq n^{1/4}$. From the geometry of the graph, it is possible to check that
\[V(\x,r)\geq \sum_{y=x+1}^{x+r-u}\log^\alpha(y)\wedge (r-u-y+x).\]
Indeed, to see this, first of all note that for a site $(y,z)$ to be contained in $V(\x,r)$ it is necessary that $x-r+u\leq y\leq x+r-u$, and hence, for a lower bound, we can indeed restrict to $x+1\leq y\leq x+r-u$. Next, observe that the number of vertices located on the tooth at $(y,0)$ is, by definition, $\log^{\alpha}(y)$; however, when $y$ becomes `too large', not all of the tooth at $y$ is contained in $V(\x,r)$ -- some of the sites located at the top of the tooth are farther than $r$ from $(x,u)$. More precisely, consider vertex $(y,z)$; for it to be in $V(\x,r)$, it must be that $z+(y-x)+u\leq r$, i.e. $z\leq r-u-y+x$. Therefore, only $\log^{\alpha}(y)\wedge (r-u-y+x)$ sites on the tooth at $(y,0)$ are actually present in $V(\x,r)$, thus establishing the desired volume lower bound.

Since the tooth at $(x,0)$ has height $\log^{\alpha}(x)\geq u$ and $\x=(x,u)\in B(\0,n)$ (which in particular implies that $x\leq n$), we obtain
\begin{equation}\label{primastima}
    r-u\geq r-\log^\alpha(x)\geq r-\log^\alpha(n)\geq r-\frac14 n^{1/4}\geq \frac34 r.
\end{equation}
Hence, if $y-x\leq 3r/8$, then 
\begin{equation}\label{secondastima}
    r-u-y+x \geq \frac34 r -\frac38 r =\frac38 r.
\end{equation}
Moreover, for $y-x\in [r/4,3r/8]$, it holds that (as $x\geq 0$,) 
\[\log^\alpha(y)\geq \log^\alpha(x+r/4)\geq \log^\alpha(r/4),\]
and also, (as $x\leq n$,)
\begin{align}\label{terzastima}
    \nonumber\log^\alpha(y)&\leq \log^\alpha(x+3r/8)\leq \log^\alpha(n+3r/8)\leq \frac14 \left(n+\frac{3}{8}r\right)^{1/4}\\
    &\leq \frac{1}{4}n^{1/4}+\left(\frac{3}{8}r\right)^{1/4}\leq \frac{1}{4}r+\left(\frac{3}{8}r\right)^{1/4}\leq \frac{3}{8}r,
\end{align}
where to ensure the final inequality we increase $n_0$ if necessary. Therefore we arrive at
\begin{align*}
   V(\x,r)&\geq  \sum_{y=x+1}^{x+r-u}\log^\alpha(y)\wedge (r-u-y+x)\geq \sum_{y=x+r/4}^{x+3r/8}\log^\alpha(y)\wedge (r-u-y+x)\\
   &\geq \sum_{y=x+r/4}^{x+3r/8}\log^\alpha(y)\geq  \frac{1}{8}r\log^\alpha\left(\frac14 r\right),
\end{align*}
where the second inequality uses (\ref{primastima}) together with the fact that $r/4\geq 1$ for large $n$, whereas the third inequality follows from a combination of (\ref{secondastima}) and (\ref {terzastima}).

Applying the bound of the previous paragraph at $r=n^{1/2}\log^{-\alpha/2}(n)$, which is clearly larger than $n^{1/4}$ when $n$ is suitably large, we obtain from \eqref{hkbound} that
\[p_{2\lfloor{n/2}\rfloor}(\mathbf{x},\mathbf{x})\leq C\left(\frac{n^{1/2}\log^{-\alpha/2}(n)}{n}+\frac{8}{n^{1/2}\log^{-\alpha/2}(n)\log^\alpha(n)}\right)\leq\frac{C}{n^{1/2}\log^{\alpha/2}(n)}.\]
Since the constant $C$ is uniform over $\x\in B(\0,n)$, we are done.
\end{proof}

Since $1/n\log^\alpha(n)$ is summable for $\alpha>1$, combining the two previous lemmas readily yields the claim of Theorem \ref{mainthm}(a). Indeed, we obtain the stronger claim that the expected number of triple collisions is finite:
\begin{align*}
    \mathbb{E}^{\0,\0,\0}\left[\left|\{n\in \mathbb{N}_0:X_n=Y_n=Z_n\}\right|\right]&=\sum_{n\in \mathbb{N}_0}^{}\mathbb{P}^{\0,\0,\0}(X_n=Y_n=Z_n)\\
    &\leq 2+9\sum_{n\geq 2}^{}\sup_{\x\in B(\0,n)}p_{2\lfloor{n/2 \rfloor}}(\x,\x)^2\\
    &\leq 2+9C\sum_{n\geq 2}^{}\frac{1}{n\log^{\alpha}(n)}\\
    &<\infty.
\end{align*}

\begin{remark}\label{4collisionrem}
    The same argument will give that the number of quadruple (or more) collisions is finite for any infinite, connected graph of bounded degree. Indeed, for any infinite, connected graph, the volume of the (effective resistance) ball of radius $r$ is at least $r$. Hence, taking $r=n^{1/2}$ in \eqref{hkbound} yields $p_{2\lfloor n/2\rfloor}(\mathbf{x},\mathbf{x})\leq Cn^{-1/2}$ for any $n\geq 1$ and vertex $\mathbf{x}$ in the graph in question, where $C$ is some universal constant. Hence, from a straightforward adaptation of Lemma \ref{l21}, we obtain that the probability of a quadruple collision at time $n$ is bounded above by $Cn^{-3/2}$. Since the latter quantity is summable, we obtain the expected number of such collisions is finite.
\end{remark}

\section{Further heat kernel estimates}\label{heatkernelestimates}

In order to establish the infinite triple collision property when $\alpha\leq 1$, we will apply various further heat kernel (transition density) estimates. The aim of this section is to derive these, which we do by applying standard arguments. For introductory background on the techniques, see \cite{Barbook, Kuma}, for example. We start with an upper bound that is an adaptation of Lemma \ref{hku1}. To state the result, we introduce the notation:
\begin{equation}\label{vndef}
\mathbb{V}_N\coloneqq\left\{\x=(x,u)\in \text{Comb}_{\ell}(\mathbb{Z},\alpha):\:|x|\leq N\right\}.
\end{equation}

\begin{lemma}\label{hku2} There exists a constant $c$ such that, for every $N\geq 2$, $\mathbf{x},\mathbf{y}\in \mathbb{V}_N\backslash\mathbb{V}_{N/4}$ and $n\geq 1$,
\[p_{n}(\mathbf{x},\mathbf{y})\leq \begin{cases}
    cn^{-1/2},&\mbox{if }n< 16\log^{3\alpha}(N),\\
    cn^{-1/2}\log^{-\alpha/2}(N),&\mbox{if }n\geq 16\log^{3\alpha}(N).\\
\end{cases}\]
\end{lemma}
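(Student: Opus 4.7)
The plan is to reduce the off-diagonal estimate to an on-diagonal one via Cauchy--Schwarz, and then apply the resistance/volume bound \eqref{hkbound} with a judicious choice of radius in each time regime. Specifically, the Cauchy--Schwarz step carried out in the proof of Lemma \ref{l21} generalises verbatim to arbitrary starting points, giving
\[p_n(\mathbf{x},\mathbf{y}) \leq \sqrt{p_{2\lfloor n/2\rfloor}(\mathbf{x},\mathbf{x})\, p_{2\lceil n/2\rceil}(\mathbf{y},\mathbf{y})}.\]
Combined with the monotonicity $p_{2(n+1)}(\mathbf{x},\mathbf{x})\leq p_{2n}(\mathbf{x},\mathbf{x})$, this reduces the claim to showing a matching on-diagonal upper bound uniform in $\mathbf{x}\in \mathbb{V}_N\setminus \mathbb{V}_{N/4}$.

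For the short-time regime $n<16\log^{3\alpha}(N)$, no local geometric information about the comb is needed. The trivial lower bound $V(\mathbf{x},r)\geq r+1$, valid on any infinite connected graph (and explicitly used in Remark \ref{4collisionrem}), together with the choice $r\asymp n^{1/2}$ in \eqref{hkbound} produces $p_{2\lfloor n/2\rfloor}(\mathbf{x},\mathbf{x}) \leq cn^{-1/2}$, which is the required bound.

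The substantive case is $n\geq 16\log^{3\alpha}(N)$, where I would mimic the argument of Lemma \ref{hku1} but exploit the fact that in $\mathbb{V}_N\setminus\mathbb{V}_{N/4}$ every nearby tooth has height comparable to $\log^{\alpha}(N)$, not $\log^{\alpha}(|y|)$ for varying $y$. Writing $\mathbf{x}=(x,u)$ with (WLOG) $N/4\leq x\leq N$, I would choose $r = n^{1/2}\log^{-\alpha/2}(N)$. The threshold on $n$ is tailored so that $r\geq 4\log^{\alpha}(N)$, which in particular forces $r-u\geq 3r/4$ since $u\leq \log^{\alpha}(x)\leq \log^{\alpha}(N)$. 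Restricting the volume sum to $y-x\in[r/4,3r/8]$, as in Lemma \ref{hku1}, gives $r-u-y+x\geq 3r/8$, while the tooth height $\log^{\alpha}(y)$ is bounded below by $c\log^{\alpha}(N/4)\geq c\log^\alpha(N)$ (using $y\geq N/4$) and above by $\log^{\alpha}(2N)\leq 3r/8$ (using $r\geq 4\log^{\alpha}(N)$ once more, with the constant chosen large enough). Hence the pointwise minimum $\log^\alpha(y)\wedge(r-u-y+x)$ equals $\log^{\alpha}(y)\geq c\log^{\alpha}(N)$ uniformly over the $\Theta(r)$ values of $y$ in this range, so
\[V(\mathbf{x},r)\geq c r \log^{\alpha}(N).\]
Substituting this and the chosen $r$ into \eqref{hkbound} yields the bound $p_{2\lfloor n/2\rfloor}(\mathbf{x},\mathbf{x})\leq c n^{-1/2}\log^{-\alpha/2}(N)$, completing the proof when combined with the Cauchy--Schwarz reduction above.

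The main obstacle is the volume estimate in the long-time regime: every constant has to be tracked carefully enough to verify that the prefactor $16$ in $n\geq 16\log^{3\alpha}(N)$ is sufficient to simultaneously swallow the vertical displacement $u$ and ensure that $\log^{\alpha}(y)$ (for $y$ in the chosen range of teeth) is dominated by the linear term $r-u-y+x$, so that the minimum in the volume sum is realised by the logarithmic factor. This is analogous to, and no harder than, the corresponding adjustment of $n_0$ in Lemma \ref{hku1}; the rest of the argument is routine.
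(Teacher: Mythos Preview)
Your proposal is correct and follows essentially the same route as the paper: Cauchy--Schwarz reduces to the on-diagonal case, then \eqref{hkbound} is applied with $r=n^{1/2}$ for small $n$ and $r=n^{1/2}\log^{-\alpha/2}(N)$ for large $n$, the latter relying on a volume lower bound $V(\mathbf{x},r)\geq c\,r\log^\alpha(N)$ coming from the fact that teeth near $\mathbf{x}$ have height $\asymp\log^\alpha(N)$. One small caution: your intermediate bound $\log^\alpha(y)\leq\log^\alpha(2N)$ tacitly assumes $y\leq 2N$, which fails once $n$ (and hence $r$) is large; the paper avoids this by summing over $y\in[x+\log^\alpha(N),\,x+r-u-\log^\alpha(N)]$ and lower-bounding the minimum $\log^\alpha(y)\wedge(r-u-y+x)$ directly by $\log^\alpha(N/4)$, so that no upper bound on $\log^\alpha(y)$ is needed at all.
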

\begin{proof}
By applying Cauchy-Schwarz as at \eqref{cs}, it will suffice to consider the case $\mathbf{x}=\mathbf{y}$ and $n$ even. As in the proof of Lemma \ref{hku1}, the key estimate we will apply is \eqref{hkbound}. This requires a lower bound on the volume. Similarly to the comment in Remark \ref{4collisionrem}, it is clear that, for any $r\geq1$,
\[V(\mathbf{x},r)\geq r,\]
where we define $V(\x,r)$ as in \eqref{vdef}. Hence, setting $r=n^{1/2}$, we obtain from \eqref{hkbound} that
\[p_n(\mathbf{x},\mathbf{x})\leq Cn^{-1/2}\]
for any $n\geq 1$ and any $\mathbf{x}\in \mathrm{Comb}_\ell(\mathbb{Z},\alpha)$. (For odd $n$, the left-hand side above is zero.) Next, suppose $\mathbf{x}=(x,u)$ is such that $x\in\{\lceil{N/4}\rceil,\lceil{N/4}\rceil+1,\dots,N\}$; the case when $x<0$ can be dealt with similarly. Arguing as in the proof of Lemma \ref{hku1}, it is further possible to check that, for $N\geq 16$ and $r\geq 4\log^\alpha(N)$, (in which case $r-u\geq 4\log^{\alpha}(N)-u\geq 3\log^{\alpha}(N)$ and the sum below is non-empty,)
\begin{equation}\label{vlb}
V(\mathbf{x},r)\geq \sum_{y=x+1}^{x+r-u}\log^\alpha(y)\wedge(r-u-y+x)\geq \sum_{y=x+\log^\alpha(N)}^{x+r-u-\log^\alpha(N)}\log^\alpha(N/4)\geq \frac{r}{4}\log^\alpha(N/4)\geq  \frac{r}{8}\log^\alpha(N).
\end{equation}
Note that to deduce the second inequality, we apply the assumption that $x\geq N/4$, and to deduce the final one, we use that $N\geq 16$. If $n\geq 16\log^{3\alpha}(N)$ and $r=n^{1/2}\log^{-\alpha/2}(N)$, then $r\geq 4\log^{\alpha}(N)$. Hence the above volume bound applies, and we deduce from \eqref{hkbound} that
\[p_n(\mathbf{x},\mathbf{x})\leq \frac{8n^{1/2}\log^{-\alpha/2}(N)}{n}+\frac{2}{cn^{1/2}\log^{\alpha/2}(N)}=\frac{c}{n^{1/2}\log^{\alpha/2}(N)},\]
as desired. This completes the proof for $N\geq 16$. The case when $2\leq N<16$ can be dealt with by adjusting the constant $c$ in the upper bound as necessary.
\end{proof}

We next provide a lower bound for the heat kernel. The form is similar to that of the upper bound, but we need to be more careful about the relationship between the time $n$ and the distance between the points $\mathbf{x}$ and $\mathbf{y}$ being considered. Moreover, for our later purposes, it will be convenient to consider the heat kernel of the random walk killed on exiting a certain spatial region. In particular, for a subset $B\subseteq \mathrm{Comb}_\ell(\mathbb{Z},\alpha)$, we set
\begin{equation}\label{killedtddef}
    p_n^B(\mathbf{x},\mathbf{y})\coloneqq\frac{\mathbb{P}^\mathbf{y}\left(X_n=\mathbf{y},\:\tau_B>n\right)}{\mathrm{deg}(\mathbf{y})},
\end{equation}
where $\tau_B$ is the exit time of $B$ by $X$, i.e.,
\[\tau_{B}\coloneqq\inf\left\{n\geq 0:\:X_n\not\in  B\right\}.\]

\begin{lemma}\label{lowerboundheatkernel}
    Fix a finite constant $C\geq 1$. There exist constants $c_1,c_2>0$ and $h\geq 1$ for which the following holds: if $N\geq 2$, $\mathbf{x}\in \mathbb{V}_N$, $ \mathbf{y}\in\mathbb{V}_N\backslash\mathbb{V}_{N/4}$ and $n\in[2C+16\log^{3\alpha}(N),CN^2\log^\alpha(N)]$ are such that $d(\mathbf{x},\mathbf{y})-n$ is even and $d(\mathbf{x},\mathbf{y})\leq c_1n^{1/2}\log^{-\alpha/2}(N)$, then
    \[p^{\mathbb{V}_{hN}}_n(\mathbf{x},\mathbf{y})\geq \frac{c_2}{n^{1/2}\log^{\alpha/2}(N)}.\]
    \end{lemma}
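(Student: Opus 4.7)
The plan is to derive a Gaussian-type lower bound matching the upper bound of Lemma~\ref{hku2}, via the standard triad of volume estimates, exit-time control, and a Chapman--Kolmogorov/Cauchy--Schwarz argument for the reversible killed walk. The critical radius that appears in both bounds is
$$r_0 \;=\; \kappa\, n^{1/2}\log^{-\alpha/2}(N),$$
at which the ball volume is $V(\mathbf{x}, r_0) \asymp r_0 \log^\alpha(N) \asymp n^{1/2}\log^{\alpha/2}(N)$, yielding the target prefactor $1/(n^{1/2}\log^{\alpha/2}(N))$. The killing by $\mathbb{V}_{hN}$ is a technicality: since on the comb the horizontal projection of the walker is diffusive on the time scale $N^2\log^\alpha(N)$ (each backbone visit triggers a tooth excursion of expected length $\asymp \log^\alpha(N)$), by choosing $h$ sufficiently large depending only on $C$, the walker started in $\mathbb{V}_N$ remains in $\mathbb{V}_{hN}$ up to time $n\leq CN^2\log^\alpha(N)$ with probability close to $1$.

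The first ingredient is the on-diagonal killed heat kernel lower bound
$$p^{\mathbb{V}_{hN}}_{2m}(\mathbf{x},\mathbf{x}) \;\geq\; \frac{c}{m^{1/2}\log^{\alpha/2}(N)}$$
for $\mathbf{x}\in \mathbb{V}_N\setminus\mathbb{V}_{N/4}$ and $m$ in the relevant range. With $r_0$ as above (using $m$ in place of $n$), the commute-time identity on the tree combined with~\eqref{vlb} gives $\mathbb{E}^\mathbf{x}[\tau_{B(\mathbf{x}, r_0)}] \asymp r_0 V(\mathbf{x}, r_0) \asymp \kappa^2 m$, and a standard second-moment argument then yields $\mathbb{P}^\mathbf{x}(\tau_{B(\mathbf{x}, r_0)} > m) \geq 1/2$ upon choosing $\kappa$ small enough. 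Since $B(\mathbf{x}, r_0) \subseteq \mathbb{V}_{hN}$ for $h$ large, the Cauchy--Schwarz bound
$$\left(\sum_{\mathbf{z}\in B(\mathbf{x},r_0)}\mathbb{P}^\mathbf{x}(X_m=\mathbf{z},\,\tau_{\mathbb{V}_{hN}}>m)\right)^2 \;\leq\; V(\mathbf{x},r_0)\cdot p^{\mathbb{V}_{hN}}_{2m}(\mathbf{x},\mathbf{x}),$$
which follows from reversibility and Chapman--Kolmogorov for the killed walk, then yields the claim upon using $V(\mathbf{x}, r_0)\asymp m^{1/2}\log^{\alpha/2}(N)$.

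To upgrade the on-diagonal bound to the off-diagonal bound of the lemma, I would chain two such estimates through an intermediate region. Pick a midpoint $\mathbf{u}$ on a geodesic from $\mathbf{x}$ to $\mathbf{y}$, split $n = m_1 + m_2$ with $m_1, m_2 \asymp n/2$, and use
$$p^{\mathbb{V}_{hN}}_{n}(\mathbf{x},\mathbf{y}) \;=\; \sum_{\mathbf{z}} p^{\mathbb{V}_{hN}}_{m_1}(\mathbf{x},\mathbf{z})\, p^{\mathbb{V}_{hN}}_{m_2}(\mathbf{z},\mathbf{y})\, \mathrm{deg}(\mathbf{z}),$$
restricting the sum to a suitable $\mathbf{z}$-region $D\ni \mathbf{u}$ of volume $\asymp n^{1/2}\log^{\alpha/2}(N)$ contained in the typical reach of both $\mathbf{x}$ and $\mathbf{y}$. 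The sub-Gaussian condition $d(\mathbf{x},\mathbf{y})\leq c_1 n^{1/2}\log^{-\alpha/2}(N)$ with $c_1$ sufficiently small is what makes this possible. The main obstacle will be making this chaining step quantitative, since the naive ``follow a deterministic path'' bound of $(1/3)^{d(\mathbf{x},\mathbf{z})}$ is too small at the relevant scales: instead one must either (i) prove a parabolic-Harnack-type inequality at those scales, or (ii) apply a further Cauchy--Schwarz/symmetry argument to reduce off-diagonal lower bounds to on-diagonal ones (using that the on-diagonal bound also holds at $\mathbf{y}$ under the assumption $\mathbf{y}\in\mathbb{V}_N\setminus\mathbb{V}_{N/4}$). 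Finally, the parity constraint $d(\mathbf{x},\mathbf{y})-n$ even is forced by bipartiteness, and the lower bound $n\geq 2C+16\log^{3\alpha}(N)$ is precisely what ensures $r_0\geq 4\log^\alpha(N)$, so that the volume bound~\eqref{vlb} may be invoked.
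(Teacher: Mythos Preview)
Your on-diagonal argument is essentially the paper's: volume two-sided bounds at radius $r\asymp n^{1/2}\log^{-\alpha/2}(N)$, exit-time control via~\eqref{etu} and~\eqref{etlb}, and Cauchy--Schwarz to turn the exit-probability lower bound into $p^{B(\mathbf{x},r)}_{2n}(\mathbf{x},\mathbf{x})\geq c/(n^{1/2}\log^{\alpha/2}(N))$. So far this is fine.

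The genuine gap is in the off-diagonal step. You correctly identify it as the ``main obstacle'' but do not resolve it: the midpoint chaining you propose would require a \emph{lower} bound on $p^{\mathbb{V}_{hN}}_{m_1}(\mathbf{x},\mathbf{z})$ uniformly over $\mathbf{z}\in D$, which is precisely the off-diagonal bound you are trying to prove; and neither of your suggested workarounds (a parabolic Harnack inequality at these scales, or an unspecified Cauchy--Schwarz reduction) is carried out. The paper avoids this circularity entirely via a \emph{first-hitting-time decomposition}: conditioning on the time $\tau_{\{\mathbf{y}\}^c}$ at which $X$ first reaches $\mathbf{y}$ and using monotonicity of the on-diagonal killed kernel gives
\[
p_n^B(\mathbf{x},\mathbf{y})\;\geq\;\mathbb{P}^{\mathbf{x}}\bigl(\tau_{\{\mathbf{y}\}^c}\leq n\wedge\tau_B\bigr)\,p^B_{2\lfloor n/2\rfloor}(\mathbf{y},\mathbf{y}),
\]
so that the off-diagonal problem is reduced to the on-diagonal bound at $\mathbf{y}$ (this is where $\mathbf{y}\in\mathbb{V}_N\setminus\mathbb{V}_{N/4}$ is used) together with an estimate on the hitting probability. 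The latter is handled by electrical-network arguments: the commute-time identity bounds $\mathbb{P}^{\mathbf{x}}(\tau_{\{\mathbf{y}\}^c}\wedge\tau_B>n)$ by $CV(\mathbf{x},r)d(\mathbf{x},\mathbf{y})/n$, and the resistance ratio bounds $\mathbb{P}^{\mathbf{x}}(\tau_{\{\mathbf{y}\}^c}>\tau_B)$ by $d(\mathbf{x},\mathbf{y})/R(\mathbf{x},B^c)$. Both terms are made small precisely by the hypothesis $d(\mathbf{x},\mathbf{y})\leq c_1 n^{1/2}\log^{-\alpha/2}(N)$ with $c_1$ small. This is the missing idea in your proposal.
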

\begin{proof}
For $\mathbf{x}=(x,u)\in\mathbb{V}_N\backslash\mathbb{V}_{N/4}$ and $r\geq 1$, write $\tau_{\mathbf{x},r}$ for $\tau_{B(\mathbf{x},r)}$. It is a standard result (see \cite[Lemma 4.1.1]{Kuma}, for example) that
\begin{equation}\label{etu}
\sup_{\mathbf{y}\in B(\mathbf{x},r)}\mathbb{E}^\mathbf{y}\left(\tau_{\mathbf{x},r}\right)\leq 12rV(\mathbf{x},r).
\end{equation}
Precisely, the statement of \cite[Lemma 4.1.1]{Kuma} takes in the upper bound the product of (the supremum over $\mathbf{y}$ of) the effective resistance from $\mathbf{y}$ to $B(\mathbf{x},r)^c$ when the graph is considered as an electrical network with unit conductors placed along each edge, which is bounded above by $2(r+1)\leq 4r$ in our case, and the sums of degrees of vertices in $B(\mathbf{x},r)$. Since all vertices have degree less than 3, we obtain the above bound. Now, take $n$ as in the statement of the lemma, and set $r=c_0n^{1/2}\log^{-\alpha/2}(N)$, where $c_0$ is a constant that will be chosen later. We then have that $r\leq c_0 C^{1/2}N$. Hence, since there are at most $2r$ teeth in $B(\x,r)$ (other than the one on which $\x$ is located) and each one of them contains no more than $\log^{\alpha}(x+r)\leq \log^{\alpha}(N+r)$ vertices, we obtain
\begin{equation}\label{vub}
V(\mathbf{x},r)\leq (2r+1)\log^\alpha(N+r)\leq c_1 r\log^\alpha(N), 
\end{equation}
where $c_1$ depends only upon $c_0$ and $C$. In particular, together with \eqref{etu} this yields
\begin{equation}\label{upper}
\sup_{\mathbf{y}\in B(\mathbf{x},r)}\mathbb{E}^\mathbf{y}\left(\tau_{\mathbf{x},r}\right)\leq 12c_1r^2\log^\alpha(N).
\end{equation}
We also need a corresponding lower bound for $\mathbb{E}^\mathbf{x}(\tau_{\mathbf{x},r})$. To this end, we note that
\begin{equation}\label{etlb}
    \mathbb{E}^\mathbf{x}\left(\tau_{\mathbf{x},r}\right)=\sum_{\mathbf{y}\in B(\mathbf{x},r)}g(\mathbf{y})\mathrm{deg}(\mathbf{y}),
\end{equation}
where $g(\mathbf{y})$ is the occupation density of $X$ when started from $\x$ and run up to $\tau_{\mathbf{x},r}$. In particular, we have from \cite[Proposition 4.1.4]{Kuma} that
\[2g(\mathbf{y})=R\left(\mathbf{x},B(\mathbf{x},r)^c\right)+R\left(\mathbf{y},B(\mathbf{x},r)^c\right)-R_{B(\mathbf{x},r)^c}\left(\mathbf{x},\mathbf{y}\right),\]
where $R(A,B)$ represents the effective resistance between subsets (or vertices) $A$ and $B$ (again, when unit conductors are placed along edges of the graph), and the final term represents the effective resistance between $\mathbf{x}$ and $\mathbf{y}$ when $B(\mathbf{x},r)^c$ is `fused', see \cite[Section 2]{Kuma} for background on effective resistance and \cite[(4.9)]{Kuma} specifically for the definition of the fused effective resistance. We will give an estimate on $g(\mathbf{y})$ for $\mathbf{y}\in B(\mathbf{x},r/16)$. First, as above, suppose $r=c_0n^{1/2}\log^{-\alpha/2}(N)$, and note that this implies $r\geq c_0 4 \log^\alpha(N)$. Moreover note that, for $x< y\leq x+r/4$, the distance from $\mathbf{x}=(x,u)$ to the endpoint of the tooth at $y$ is given by
\[u+y-x+\log^\alpha(y)\leq \log^\alpha(N)+r/4+\log^\alpha(N+r/4)\leq 2\log^\alpha(N)+r/2\leq r\left(\frac{1}{2}+\frac{1}{2c_0}\right)\leq r,\]
where we assume that $c_0$ is large enough so that $c_0\geq e\log^{-\alpha}(2)$, which implies $r/4\geq e$ (so $\log^\alpha(r/4)\leq r/4$) and also gives the final inequality above. As a consequence of the above estimate (and a similar one when $y\leq x$), we deduce that any path from $\mathbf{x}$ to $B(\mathbf{x},r)^c$ has to cross the arc from $(x,0)$ to $(x+r/4,0)$ or that from $(x,0)$ to $(x-r/4,0)$. It readily follows that
\[R(\mathbf{x},B(\mathbf{x},r)^c)\geq r/8.\]
By the triangle inequality for resistances and the fact that $R_{B(\mathbf{x},r)^c}(\mathbf{x},\mathbf{y})\leq d(\mathbf{x},\mathbf{y})$, one further has that, if $\mathbf{y}\in B(\mathbf{x},r/16)$, then
\[R\left(\mathbf{y},B(\mathbf{x},r)^c\right)\geq R\left(\mathbf{x},B(\mathbf{x},r)^c\right)-R_{B(\mathbf{x},r)^c}\left(\mathbf{x},\mathbf{y}\right)\geq r/8-r/16=r/16.\]
Hence, for $\mathbf{y}\in B(\mathbf{x},r/16)$, we have proved that
\[2g(\mathbf{y})\geq r/8+r/16-r/16=r/8.\]
Returning to \eqref{etlb}, this implies
\[\mathbb{E}^\mathbf{x}\left(\tau_{\mathbf{x},r}\right)\geq \frac{r}{16}V(\mathbf{x},r/16).\]
Increasing $c_0$ if necessary to ensure $c_0\geq 16$ and recalling that $r\ge c_04\log^{\alpha}(N)$, we find that $r/16\geq 4\log^\alpha(N)$, and so, for $N\geq 16$, we can apply \eqref{vlb} to give that
\begin{equation}\label{lower}
\mathbb{E}^\mathbf{x}\left(\tau_{\mathbf{x},r}\right)\geq \frac{1}{2048}r^2\log^\alpha(N).
\end{equation}
Continuing to suppose $N\geq 16$ and combining the bounds at \eqref{upper} and \eqref{lower}, we have by the standard argument involving an application of the Markov property at time $t\geq 1$ (cf.\ the proof of \cite[Proposition 4.4.3]{Kuma}) that
\[\frac{1}{2048}r^2\log^\alpha(N)\leq \mathbb{E}^\mathbf{x}\left(\tau_{\mathbf{x},r}\right)\leq t+ 
\mathbb{P}^\mathbf{x}\left(\tau_{\mathbf{x},r}>t\right)\sup_{\mathbf{y}\in B(\mathbf{x},r)}\mathbb{E}^\mathbf{y}\left(\tau_{\mathbf{x},r}\right)\leq t+\mathbb{P}^\mathbf{x}\left(\tau_{\mathbf{x},r}> t\right)12c_1r^2\log^\alpha(N).\]
Specifically, we obtain that, if $t\leq r^2\log^\alpha(N)/4096$, then
\begin{equation}\label{exitprob}
\mathbb{P}^\mathbf{x}\left(\tau_{\mathbf{x},r}> t\right)\geq c_2
\end{equation}
for some constant $c_2$ depending on $c_1$. To complete our argument for estimating from below the on-diagonal part of the heat kernel, we note that, similarly to \cite[Proposition 4.3.4]{Kuma}, for all $n$ and $r$,
\[\mathbb{P}^\mathbf{x}\left(\tau_{\mathbf{x},r}>n\right)^2=\left(\sum_{\mathbf{y}\in B(\mathbf{x},r)}p_n^{B(\mathbf{x},r)}(\mathbf{x},\mathbf{y})\mathrm{deg}(\mathbf{y})\right)^2\leq  3p_{2n}^{B(\mathbf{x},r)}(\mathbf{x},\mathbf{x})V(\mathbf{x},r),\]
where the second inequality here is derived using Cauchy-Schwarz. Taking our choice of $r=c_0n^{1/2}\log^{-\alpha/2}(N)$, we have that $n=c_0^{-2}r^2\log^\alpha (N)$, which is less than $r^2\log^\alpha(N)/4096$ if $c_0\geq 64$. Hence, combining the previous bound, \eqref{vub} and \eqref{exitprob} yields
\begin{equation}\label{odlower}
p_{2n}^{B(\mathbf{x},r)}(\mathbf{x},\mathbf{x})\geq \frac{c_3}{r\log^\alpha(N)}=\frac{c_4}{n^{1/2}\log^{\alpha/2}(N)}.
\end{equation}
Collecting the constraints, we highlight that this argument is valid if $N\geq 16$ and $c_0\geq 64\vee16\vee e\log^{-\alpha}(2)=64$. The result is readily extended to $2\leq N<15$ by reducing $c_4$ as required.

Finally, we deal with the case when we have $\mathbf{x}\in \mathbb{V}_N$ and $\mathbf{y}\in\mathbb{V}_N\backslash\mathbb{V}_{N/4}$ with $\mathbf{x}\neq \mathbf{y}$. To do this, we start by observing that, similarly to the argument used in the proof of \cite[Lemma 3.4]{Chenchen}, if  $d(\mathbf{x},\mathbf{y})-n$ is even and $B\subseteq \mathrm{Comb}_\ell(\mathbb{Z},\alpha)$, then
\begin{equation}\label{ilb}
p_n^B(\mathbf{x},\mathbf{y})=\sum_{k=0}^{\lfloor n/2\rfloor}\mathbb{P}^\mathbf{x}\left(\tau_B>n-2k,\:\tau_{\{\mathbf{y}\}^c}=n-2k\right)p^B_{2k}(\mathbf{y},\mathbf{y})\geq\mathbb{P}^\mathbf{x}\left(\tau_{\{\mathbf{y}\}^c}\leq n\wedge \tau_B\right)p^B_{2\lfloor n/2\rfloor}(\mathbf{y},\mathbf{y}),
\end{equation}
where the equality is obtained by conditioning on the time at which $X$ first hits $\y$, $\tau_{\{\mathbf{y}\}^c}$, and the inequality holds because $p^B_{2k}(\mathbf{y},\mathbf{y})$ is decreasing in $k$ (see \cite[(3.3)]{BPS}, for example). Now, take
\[B=B(\mathbf{x},(c_0+c_1)n^{1/2}\log^{-\alpha/2}(N))\]
where $c_0$ is as in the first part of the proof and $c_1$ is as in the statement of the lemma. We then have that
\begin{align*}
    \mathbb{P}^\mathbf{x}\left(\tau_{\{\mathbf{y}\}^c}> n\wedge \tau_B\right)&\leq\mathbb{P}^\mathbf{x}\left(\tau_{\{\mathbf{y}\}^c}\wedge\tau_B >n\right)+\mathbb{P}^\mathbf{x}\left(\tau_{\{\mathbf{y}\}^c}>\tau_B\right).
\end{align*}
By the commute time identity (see \cite[Proposition 10.6]{LPW}, for example), the first of these terms is bounded above as follows:
\[\mathbb{P}^\mathbf{x}\left(\tau_{\{\mathbf{y}\}^c}\wedge\tau_B >n\right)\leq \frac{3V(\mathbf{x},(c_0+c_1)n^{1/2}\log^{-\alpha/2}(N))d(\mathbf{x},\mathbf{\mathbf{y}})}{n},\]
where we use that $d(\mathbf{x},\mathbf{\mathbf{y}})$ is an upper bound for the effective resistance from $\mathbf{x}$ to $\{\mathbf{y}\}\cup B^c$. Moreover, by \cite[Exercise 2.36]{LP}, for example, the second term satisfies
\[\mathbb{P}^\mathbf{x}\left(\tau_{\{\mathbf{y}\}^c}>\tau_B\right)\leq  \frac{d(\mathbf{x},\mathbf{\mathbf{y}})}{R(\mathbf{x},B^c)}.\]
Since we assume $d(\mathbf{x},\mathbf{y})\leq c_1n^{1/2}\log^{-\alpha/2}(N)$, with $n\in[2C+16\log^{3\alpha}(N),CN^2\log^\alpha(N)]$, it is elementary to check that
\begin{align*}
\lefteqn{V(\mathbf{x},(c_0+c_1)n^{1/2}\log^{-\alpha/2}(N))d(\mathbf{x},\mathbf{y})}\\
&\leq 4(c_0+c_1)c_1n\log^{-\alpha}(N)\log^\alpha(N+(c_0+c_1)C^{1/2}N)\\
&\leq4(c_0+c_1)c_1n\left(1+\frac{\log^\alpha(1+(c_0+c_1)C^{1/2})}{\log^\alpha(2)}\right).
\end{align*}
Additionally, we have from above that
\[R(\mathbf{x},B^c)\geq R(\mathbf{x},B(\mathbf{x},c_0n^{1/2}\log^{-\alpha/2}(N))^c)\geq \frac18c_0n^{1/2}\log^{-\alpha/2}(N).\]
Putting together the various estimates, we find that
\begin{equation}\label{c0choice}
\mathbb{P}^\mathbf{x}\left(\tau_{\{\mathbf{y}\}^c}> n\wedge \tau_B\right)\leq 12(c_0+c_1)c_1\left(1+\frac{\log^\alpha(1+(c_0+c_1)C^{1/2})}{\log^\alpha(2)}\right)+\frac{8c_1}{c_0}.
\end{equation}
In particular, by choosing $c_1$ suitably small, this probability can be made smaller than $1/2$, and we obtain from \eqref{ilb} that
\[p_n^B(\mathbf{x},\mathbf{y})\geq \frac12p^B_{2\lfloor n/2\rfloor}(\mathbf{y},\mathbf{y})\geq  \frac12p^B_{2n}(\mathbf{y},\mathbf{y})\geq  \frac12p^{B(\mathbf{y},c_0n^{1/2}\log^{-\alpha/2}(N))}_{2n}(\mathbf{y},\mathbf{y}),\]
where for the final inequality we use that $B\supseteq B(\mathbf{y},c_0n^{1/2}\log^{-\alpha/2}(N))$. Moreover, recalling that $\x\in \mathbb{V}_N$ by assumption, we have that $B(\mathbf{x},r)\subseteq \mathbb{V}_{N+r}\subseteq \mathbb{V}_{(1+c_0C^{1/2})N}$, which we can ensure is contained within $\mathbb{V}_{hN}$ by taking $h\geq 1+c_0C^{1/2}$. So, we conclude from the above bound and \eqref{odlower} that
\[p_n^{\mathbb{V}_{hN}}(\mathbf{x},\mathbf{y})\geq p_n^B(\mathbf{x},\mathbf{y}) \geq \frac12p^{B(\mathbf{y},c_0n^{1/2}\log^{-\alpha/2}(N))}_{2n}(\mathbf{y},\mathbf{y}) \geq\frac{c_4}{2n^{1/2}\log^{\alpha/2}(N)},\]
which confirms the result.
\end{proof}

Before concluding this section, we give an adaptation of the previous result that will also be useful for us later. We note the bound runs over a restricted range of $n$, but applies to all $\mathbf{x}\in \mathbb{V}_N$ and $\y\in \mathbb{V}_N\backslash\mathbb{V}_{N/4}$.

\begin{corollary}\label{cormainHKLB}
    There exist constants $c_1,c_2,c_3>0$ with $c_1<c_2$ and $h\geq 1$ for which the following holds: if $N\geq 2$, $\mathbf{x}\in \mathbb{V}_N$, $\y\in \mathbb{V}_N\backslash\mathbb{V}_{N/4}$ and $n\in [c_1N^2\log^{\alpha}(N),c_2N^2\log^{\alpha}(N)]$ are such that $d(\x,\y)-n$ is even, then 
    \[p^{\mathbb{V}_{hN}}_n(\x,\y)\geq \frac{c_3}{N\log^{\alpha}(N)}.\]
\end{corollary}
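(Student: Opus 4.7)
The plan is to deduce the corollary directly from Lemma \ref{lowerboundheatkernel}, by reading that lemma at the upper end of its allowed time range. In the regime $n\asymp N^2\log^\alpha(N)$ the distance hypothesis of Lemma \ref{lowerboundheatkernel} becomes essentially vacuous: the admissible distance $c_1^{\star} n^{1/2}\log^{-\alpha/2}(N)$ is of order $N$, which already dominates the diameter of $\mathbb{V}_N$. Hence the restriction to pairs $\mathbf{x},\mathbf{y}$ with $d(\mathbf{x},\mathbf{y})$ small that features in the lemma can be removed, yielding the uniform lower bound asserted in the corollary.

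More concretely, I would apply Lemma \ref{lowerboundheatkernel} with its input constant taken to be the corollary's $c_2$, and write $c_1^{\star}$, $c_2^{\star}$, $h$ for the constants it then produces. By the geometry of the comb, every $\mathbf{x},\mathbf{y}\in\mathbb{V}_N$ satisfies
\[
d(\mathbf{x},\mathbf{y})\leq 2N+2\log^\alpha(N)\leq 3N
\]
for $N$ large (descend each point to the spine and travel along it). Meanwhile, the assumed lower bound $n\geq c_1 N^2\log^\alpha(N)$ gives $n^{1/2}\log^{-\alpha/2}(N)\geq\sqrt{c_1}\,N$. Choosing $c_1$ large enough that $\sqrt{c_1}\,c_1^{\star}\geq 3$ therefore guarantees the distance hypothesis $d(\mathbf{x},\mathbf{y})\leq c_1^{\star} n^{1/2}\log^{-\alpha/2}(N)$ of Lemma \ref{lowerboundheatkernel} uniformly in $\mathbf{x}\in\mathbb{V}_N$ and $\mathbf{y}\in\mathbb{V}_N\setminus\mathbb{V}_{N/4}$. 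The parity condition and the mild lower bound $n\geq 2c_2+16\log^{3\alpha}(N)$ required by the lemma are built into our hypothesis and automatic for $N$ large, with small $N$ absorbed into the final constant $c_3$. Applying the lemma then gives
\[
p_n^{\mathbb{V}_{hN}}(\mathbf{x},\mathbf{y})\geq \frac{c_2^{\star}}{n^{1/2}\log^{\alpha/2}(N)}\geq \frac{c_2^{\star}}{\sqrt{c_2}\,N\log^\alpha(N)},
\]
the second inequality using $n\leq c_2N^2\log^\alpha(N)$; one then sets $c_3=c_2^{\star}/\sqrt{c_2}$.

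There is no substantive obstacle in the argument, which is essentially a rereading of Lemma \ref{lowerboundheatkernel} at its extremal scale. The only care needed is to order the constants consistently: since $c_1^{\star}$ depends on the lemma's input $C=c_2$, one should first take $c_2$ large enough that $9/(c_1^{\star})^2<c_2$ (which is possible because the dependence of $c_1^{\star}$ on $c_2$ through \eqref{c0choice} in the proof of Lemma \ref{lowerboundheatkernel} is only polylogarithmic in $c_2$), and then set $c_1=9/(c_1^{\star})^2$, ensuring $c_1<c_2$ and the distance hypothesis simultaneously.
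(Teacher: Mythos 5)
Your proposal is correct and follows essentially the same route as the paper: bound $d(\x,\y)\leq 3N$ by the comb geometry, note that at times $n\asymp N^2\log^\alpha(N)$ the admissible distance $c_1^\star n^{1/2}\log^{-\alpha/2}(N)$ is of order $N$ times a factor that can be made large because the lemma's distance constant degrades only polylogarithmically in its input $C$, and then read off the bound from Lemma \ref{lowerboundheatkernel}. Your handling of the constant ordering (choosing $c_2$ first, then $c_1$ from $c_1^\star$) matches the paper's choice of $c_1=C^{-\varepsilon}$ with $n\in[(C/2)N^2\log^\alpha(N),CN^2\log^\alpha(N)]$ in all essentials.
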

\begin{proof}
Observe that, in the proof of Lemma \ref{lowerboundheatkernel}, $c_0$ could be chosen independently of $C$, and the requirement on $c_1$ was that it ensured the upper bound at \eqref{c0choice} was smaller than $1/2$. In particular, given any $c_0\geq 64$ and $\varepsilon\in (0,1/2)$, the latter requirement is met by taking $c_1=C^{-\varepsilon}$ with $C$ suitably large. Suppose this is the case. Now, if $\mathbf{x}\in \mathbb{V}_{ N}$ and $\y\in \mathbb{V}_{N\backslash\mathbb{V}_{N/4}}$, then
  \[d(\x,\y)\leq 1+2 N+2\log^\alpha( N)\leq 3 N\]
  for $N$ suitably large. Hence, if $n\in [(C/2)N^2\log^\alpha(N),CN^2\log^\alpha(N)]$, then (for sufficiently large $C$)
  \[d(\x,\y)\leq c_1\left(\frac{C}{2}\right)^{1/2}N\leq c_1n^{1/2}\log^{-\alpha/2}(N).\]
Since $n\geq(C/2) N^2\log^\alpha(N) \geq 2C+16\log^{3\alpha}(N)$ (again, for $N$ suitably large), we thus obtain from Lemma \ref{lowerboundheatkernel} that, if $d(\x,\y)-n$ is even, then 
    \[p^{\mathbb{V}_{hN}}_n(\mathbf{x},\mathbf{y})\geq \frac{c_2}{n^{1/2}\log^{\alpha/2}(N)}\geq \frac{c_3}{N\log^\alpha(N)},\] 
    which gives us the result for large $N$. For small $N$, there are only a finite number of cases to consider, and we can ensure the bound is met by adjusting the constants as necessary.
\end{proof}

\section{Infinitely many triple collisions occur when $\alpha\leq 1$}\label{infinite}

Recall that $(X_n)_{n\geq0}, (Y_n)_{n\geq0}$ and $(Z_n)_{n\geq0}$ are independent simple random walks on $\text{Comb}_{\ell}(\mathbb{Z},\alpha)$ started at the origin $\0=(0,0)$.
The goal here is to show that, when $\alpha\leq 1$, the number of \textit{triple} collisions 
\begin{equation*}
   \left|\{n\in \mathbb{N}_0:X_n=Y_n=Z_n\}\right|
\end{equation*}
is infinite with probability one and, to do so, we take inspiration from \cite{Chenchen} where the authors used the second moment method to establish that two random walks meet infinitely many times almost-surely before leaving `growing boxes' covering the underlying graph. (We remark that even though we apply the second moment method along the lines of \cite{Chenchen}, the heat-kernel estimates we depend upon for the first and second moment computations are novel and provide a possibly more robust approach to the collision problem.) 

We will start by supposing a bound on the probability that the three walkers meet before exiting a certain spatial region. In particular, for any $N\in \mathbb{N}$, recalling the definition of $\mathbb{V}_N$ from \eqref{vndef}, we set $\theta_{N}\coloneqq \theta^X_{N}\wedge \theta^Y_{N}\wedge \theta^Z_{N}$, where $\theta^X_{N}$ (respectively $\theta^Y_{N}$, $\theta^Z_{N}$) is the first time at which $X$ (respectively $Y$, $Z$) leaves $\mathbb{V}_N$, that is
\[\theta^X_{N}\coloneqq \inf\left\{n\in \mathbb{N}_0:X_n\notin \mathbb{V}_N\right\}\]
(and similarly for $\theta^Y_{N},\theta^Z_{N}$). Assume for the moment that there exists an integer $h\geq 2$ and constant $c>0$ such that, for all large enough $N$,
\begin{equation}\label{mainLB}
\mathbb{P}^{\x,\y,\z}\left(\exists n\in [0,\theta_{hN}): X_n=Y_n=Z_n\right)\geq\frac{c}{L(N)}
\end{equation}
for every $\x,\y,\z\in \mathbb{V}_N$ at even distance, where $L : \mathbb{N}\rightarrow (0,\infty)$ is defined by setting
\[L(n)\coloneqq\left\{
  \begin{array}{ll}
    \log(n), & \hbox{if $0<\alpha<1$;} \\
    \log(n)\log\log(n), & \hbox{if $\alpha=1$.}
  \end{array}
\right.\]
(Of course, the condition that the starting distances of the three walkers from each other must all be even is necessary for the three walkers to have any chance of meeting; even if we do not say so explicitly, we will always assume that this is the case.) Then, for $m\in\mathbb{N}$, denote by $\mathcal{H}_m$ the event that $X_n=Y_n=Z_n$ at some time $n$ between $\theta_{h^m}$ and $\theta_{h^{m+1}}$, i.e.,
\[\mathcal{H}_m\coloneqq \left\{\exists n\in [\theta_{h^m},\theta_{h^{m+1}}):\:X_n=Y_n=Z_n\right\}.\]
By the strong Markov property and (\ref{mainLB}), we obtain that, almost-surely, for all large $m$,
\begin{align*}
\lefteqn{\mathbb{P}^{\0,\0,\0}\left(\mathcal{H}_{m}\mid X_k,Y_k,Z_k,\:0\leq k\leq \theta_{h^m}\right)}\\
&=\mathbb{P}^{X_{\theta_{h^m}},Y_{\theta_{h^m}},Z_{\theta_{h^m}}}\left(\exists n\in [0,\theta_{h^{m+1}}): X_n=Y_n=Z_n\right)\\
&\geq \frac{c}{m^{}\log(m)},
\end{align*}
where $c>0$ is a deterministic constant (that might be different to the one in \eqref{mainLB}). Since the series $\sum_{m\geq2}1/(m\log(m))$ diverges, a conditional version of the Borel-Cantelli lemma (see Lemma \ref{BCL} in the appendix) then yields
\begin{equation}\label{bcapp}
\mathbb{P}^{\0,\0,\0}(\mathcal{H}_m \text{ infinitely often})=1.
\end{equation}
The desired result follows immediately after noticing that
\[\mathbb{P}^{\0,\0,\0}\left(\left|\{n\in \mathbb{N}_0:X_n=Y_n=Z_n\}\right|=\infty\right)\geq \mathbb{P}^{\0,\0,\0}(\mathcal{H}_m \text{ infinitely often}).\]
Thus, in order to show that three independent simple random walks on $\text{Comb}_{\ell}(\mathbb{Z},\alpha)$ collide infinitely many times with probability one when $\alpha\leq 1$, as is required to confirm Theorem \ref{mainthm}(b), it will suffice to establish (\ref{mainLB}) for some integer $h\geq 2$; we will do this in Section \ref{firstM} below. (The particular value of $h$ will be chosen in order to apply the heat kernel estimates of the previous section.) In the remainder of this section, we will go on to show a \textit{stronger} statement; more precisely, we will give an estimate on the growth rate of the number of (triple) collisions in $\mathbb{V}_{hN}$ (and, similarly, up to time $N$) in both regimes $\alpha <1$ and $\alpha=1$. As per Theorem \ref{thmgrowth}, we will show that the number of meetings diverges with $N$ at least as fast as $\log^{1-\alpha}(N)$ for $\alpha<1$ and at least as fast as $\log\log(N)$ when $\alpha=1$. 

\subsection{Counting the number of triple collisions in $\mathbb{V}_{hN}$}

The goal here is to prove the results described in the above discussion. Towards this end, we will show that, when $\alpha<1$, the number of triple collisions prior to $\theta_{hN}$ (which we recall is the first time at which one of the walkers leaves $\mathbb{V}_{hN}$) is at least of order $\log^{1-\alpha}(N)$ with probability bounded from below by $c/\log(N)$, whereas, when $\alpha=1$, the number of triple collisions prior to $\theta_{hN}$ is at least of order $\log\log(N)$ with probability bounded from below by $c/\log(N)\log\log(N)$. We establish these facts by means of a two-step procedure, which is based on an application of the second moment method (see Lemma \ref{PZI} in the Appendix). In particular, in Section \ref{firstM}, we show that there is at least one meeting before time $\theta_{hN}$ with probability at least $c/L(N)$; this is already enough to establish the lower bound at (\ref{mainLB}) (and thus, as noted above, Theorem \ref{mainthm}(b)). Subsequently, in Section \ref{subseqM}, we show that, once the three random walkers have met for the first time, they are bound to meet with constant, positive probability a divergent number of times (whose divergence rate depends on the value of $\alpha$, as anticipated earlier) before time $\theta_{hN}$. Finally, in Section \ref{finalsec}, we establish Theorem \ref{thmgrowth}.

\subsubsection{The first meeting}\label{firstM}

By Corollary \ref{cormainHKLB}, we know that there are positive constants $c_1,c_2,c_3>0$ with $c_1<c_2$ and $h\geq 1$ such that, if $N\geq 2$, $\mathbf{x}\in \mathbb{V}_N$, $\y\in \mathbb{V}_N\backslash\mathbb{V}_{N/4}$ and $n\in [c_1N^2\log^{\alpha}(N),c_2N^2\log^{\alpha}(N)]$ satisfy $d(\x,\y)-n$ is even, then 
\begin{equation}\label{jio}
p^{\mathbb{V}_{hN}}_n(\x,\y)\geq \frac{c_3}{N\log^{\alpha}(N)}.
\end{equation}
We use these constants to define, for a given $\varepsilon \in (0,(1/3)\wedge (1-c_1/c_2))$,
\begin{equation}\label{t1def}
    T_1=T_1(N)\coloneqq \lfloor c_2(1-\varepsilon) N^2 \log^{\alpha}(N)\rfloor,
\end{equation}
increase $h$ if necessary to be an integer that is greater than $2$ (note that this means \eqref{jio} will still be valid), and then set
\begin{equation}\label{h1RV}
    H_1\coloneqq \sum_{n=1}^{T_1}\mathbbm{1}_{\mathcal{C}_n},
\end{equation}
where, writing $U_n^X$ for the horizontal coordinate of $X_n$ and $V^X_n$ for the vertical coordinate, so that $X_n=(U_n^X,V_n^X)$,
\[\mathcal{C}_n\coloneqq \left\{X_n=Y_n=Z_n,\: n<\theta_{hN},\: U^X_n\in (N/2,N],\: V^X_n\in [\varepsilon \log^{\alpha}(N/2),2\varepsilon \log^{\alpha}(N/2)]\right\}.\]
Thus $H_1$ is a random variable counting the number of triple collisions prior to time $\theta_{hN}$, subject to some geometric constraints. In particular, in the definition of $H_1$ we force the three random walks to meet on a tooth `far enough' from the origin $\0$ (this is the meaning of the condition $U^X_n\in (N/2,N]$) and, moreover, we ask the three walkers to collide at some `suitably-high site' on that tooth (this is the purpose of the condition $V^X_n\in [\varepsilon \log^{\alpha}(N/2),2\varepsilon \log^{\alpha}(N/2)]$). We highlight that, for establishing \eqref{mainLB} and our estimate on the collision rate when $\alpha<1$, it would suffice to consider the simpler random variable $H'_1\coloneqq \sum_{n=1}^{T_1}\mathbbm{1}_{\mathcal{E}_n}$, where
\begin{equation}\label{curlyedef}
\mathcal{E}_n\coloneqq\left\{X_n=Y_n=Z_n\in\mathbb{V}_N\backslash\mathbb{V}_{N/2},\:n< \theta_{hN}\right\},
\end{equation}
but we shall see later that for estimating the collision rate when $\alpha=1$, it is helpful to know that the three walkers have first met at one of the sites having the properties appearing in the definition of $H_1$. We claim that, taking $\x,\y,\z\in \mathbb{V}_N$, when $\alpha<1$, it holds that
\begin{equation}\label{localgoalsmallalpha}
    \mathbb{P}^{\x,\y,\z}(H_1\geq 1)\geq \frac{c}{\log(N)},
\end{equation}
and when $\alpha=1$, we have
\begin{equation}\label{localgoalalphaisone}
    \mathbb{P}^{\x,\y,\z}(H_1\geq 1)\geq \frac{c}{\log(N)\log\log(N)},
\end{equation}
for some constant $c>0$. (Note that, as per the discussion at the start of Section \ref{infinite}, these two lower bounds are already enough to establish that infinitely many collisions of the three random walks occur almost-surely.)

We will establish \eqref{localgoalsmallalpha} and \eqref{localgoalalphaisone} by means of the second moment method (see Lemma \ref{PZI}), which in particular allows us to write 
\begin{equation}\label{secondmomentfirstmeeting}
    \mathbb{P}^{\x,\y,\z}(H_1\geq 1)\geq\frac{\mathbb{E}^{\x,\y,\z}[H_1]^2}{\mathbb{E}^{\x,\y,\z}[H_1^2]}.
\end{equation}
In the next two lemmas, we bound the first and second moments of $H_1$, respectively.

\begin{lemma}\label{expH}
     There exists a constant $c>0$ such that, if $N\geq 2$ and $\x,\y,\z\in \mathbb{V}_N$ are such that $d(\x,\y)$, $d(\y,\z)$ and $d(\x,\z)$ are even, then 
    \[\mathbb{E}^{\x,\y,\z}[H_1]\geq \frac{c}{\log^{\alpha}(N)}.\]
\end{lemma}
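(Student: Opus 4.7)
The plan is to expand $\mathbb{E}^{\x,\y,\z}[H_1]=\sum_{n=1}^{T_1}\mathbb{P}^{\x,\y,\z}(\mathcal{C}_n)$ into a double sum and bound each summand from below using Corollary~\ref{cormainHKLB}. By independence of $X,Y,Z$ and the identity $\mathbb{P}^{\x}(X_n=\w,\,n<\theta^X_{hN})=p_n^{\mathbb{V}_{hN}}(\x,\w)\deg(\w)$, I get
\[
\mathbb{E}^{\x,\y,\z}[H_1]=\sum_{n=1}^{T_1}\sum_{\w\in A}p_n^{\mathbb{V}_{hN}}(\x,\w)\,p_n^{\mathbb{V}_{hN}}(\y,\w)\,p_n^{\mathbb{V}_{hN}}(\z,\w)\,\deg(\w)^{3},
\]
where $A\coloneqq\{(u,v):u\in(N/2,N],\,v\in[\varepsilon\log^{\alpha}(N/2),2\varepsilon\log^{\alpha}(N/2)]\}$. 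Each $\w=(u,v)\in A$ satisfies $u>N/2$, so that $A\subseteq\mathbb{V}_N\setminus\mathbb{V}_{N/4}$; moreover $v<\log^{\alpha}(N/2)<\log^{\alpha}(u)$ (using $\varepsilon<1/3$), so that $\w$ sits strictly inside its tooth and has degree $2$; and a direct count gives $|A|\geq cN\log^{\alpha}(N)$ for $N$ large.

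The second step is to apply Corollary~\ref{cormainHKLB} termwise. By the choice $\varepsilon<1-c_1/c_2$ in the definition of $T_1$, the window $n\in[c_1N^{2}\log^{\alpha}(N),T_1]$ is nonempty and sits inside $[c_1N^{2}\log^{\alpha}(N),c_2N^{2}\log^{\alpha}(N)]$. The corollary requires $d(\cdot,\w)-n$ to be even; however, since $\mathrm{Comb}_{\ell}(\mathbb{Z},\alpha)$ is bipartite and the initial pairwise distances $d(\x,\y),d(\x,\z),d(\y,\z)$ are all even by hypothesis, the parities of $d(\x,\w)$, $d(\y,\w)$, $d(\z,\w)$ agree for every $\w\in A$. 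Splitting $A$ according to the parity of $d(\x,\cdot)$ and letting $A^{(n)}\subseteq A$ denote the class whose parity matches that of $n$, an elementary pigeonhole gives $|A^{(n)}|\geq cN\log^{\alpha}(N)$, and on $A^{(n)}$ the corollary applies simultaneously to all three killed heat-kernel factors. Restricting the inner sum to $A^{(n)}$ and using $\deg(\w)^{3}\geq 1$ therefore yields
\[
\mathbb{P}^{\x,\y,\z}(\mathcal{C}_n)\geq |A^{(n)}|\left(\frac{c_3}{N\log^{\alpha}(N)}\right)^{3}\geq\frac{c}{N^{2}\log^{2\alpha}(N)}
\]
for every admissible $n$.

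Summing over the $\Omega(N^{2}\log^{\alpha}(N))$ admissible integers then gives
\[
\mathbb{E}^{\x,\y,\z}[H_1]\geq cN^{2}\log^{\alpha}(N)\cdot\frac{1}{N^{2}\log^{2\alpha}(N)}=\frac{c'}{\log^{\alpha}(N)},
\]
which is the claim for all large enough $N$; the finitely many small values of $N$ can be absorbed by decreasing $c'$. The only real obstacle in implementing this plan is the parity bookkeeping -- making sure a single $\w\in A$ can simultaneously meet the even-difference hypothesis of Corollary~\ref{cormainHKLB} for all three walks at once -- and this is resolved cleanly by the bipartite structure of $\mathrm{Comb}_{\ell}(\mathbb{Z},\alpha)$ together with the standing assumption that the three initial pairwise distances are even; everything else reduces to counting lattice points in the rectangle defining $A$.
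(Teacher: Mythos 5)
Your proposal is correct and follows essentially the same route as the paper's proof: expand $\mathbb{E}^{\x,\y,\z}[H_1]$ as a double sum over times and sites, restrict to the time window $[c_1N^2\log^\alpha(N),T_1]$ where Corollary \ref{cormainHKLB} applies, and multiply the resulting pointwise bound by $|S|$ and the length of the window. Your explicit treatment of the parity bookkeeping via bipartiteness is a point the paper handles only implicitly, but it does not change the argument.
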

\begin{proof}
    For a given $N$, denote by $S$ the set of all the potential sites where the three random walkers could meet in such a way as to contribute to $H_1$; that is, we set
    \[S\coloneqq \left\{\w=(w,\ell)\in\mathbb{V}_{hN}:\:w\in (N/2,N],\: \ell\in[\varepsilon \log^{\alpha}(N/2),2\varepsilon \log^{\alpha}(N/2)]\right\},\]
    and write
    \begin{equation*}
        \mathbb{E}^{\x,\y,\z}[H_1]=\sum_{n=1}^{T_1}\sum_{\w\in S}^{}p^{\mathbb{V}_{hN}}_n(\x,\w)p^{\mathbb{V}_{hN}}_n(\y,\w)p^{\mathbb{V}_{hN}}_n(\z,\w).
    \end{equation*}
    We will restrict the first sum to values of $n$ greater than $T_1'\coloneqq \lceil c_1 N^2\log^{\alpha}(N)\rceil$. (Note that, by our choice of $\varepsilon$, $c_1<(1-\varepsilon)c_2$.) In particular, applying \eqref{jio}, we can bound from below the above expression by 
    \begin{equation*}
        \sum_{n=T_1'}^{T_1}\sum_{\w\in S}^{}p^{\mathbb{V}_{hN}}_n(\x,\w)p^{\mathbb{V}_{hN}}_n(\y,\w)p^{\mathbb{V}_{hN}}_n(\z,\w)
        \geq \frac{c_3 (T_1-T_1')|S|}{N^3\log^{3\alpha}(N)}.
    \end{equation*}
Since $|S|\geq cN\log^\alpha(N)$ and $T_1-T_1'\geq c' N^2\log^\alpha (N)$, we arrive at 
    \[\mathbb{E}^{\x,\y,\z}[H_1]\geq \frac{c''N\log^\alpha(N)\times N^2\log^\alpha (N)}{N^3\log^{3\alpha}(N)}=\frac{c''}{\log^{\alpha}(N)},\]
    thereby completing the proof.    
\end{proof}

\begin{lemma}\label{secmomH}
   There exists a constant $c>0$ such that, if $N\geq 2$ and $\x,\y,\z\in \mathbb{V}_N$ are such that $d(\x,\y)$, $d(\y,\z)$ and $d(\x,\z)$ are even, then 
    \[\mathbb{E}^{\x,\y,\z}[H_1^2]\leq c\mathbb{E}^{\x,\y,\z}[H_1]\left(\log\log(N)+\log^{1-\alpha}(N)\right).\]
\end{lemma}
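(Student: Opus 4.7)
The plan is to expand the second moment, apply the Markov property at the time of the first triple collision, and then estimate the resulting heat-kernel sum using Lemma \ref{hku2}. Concretely, I start from
\[\mathbb{E}^{\x,\y,\z}[H_1^2]=\mathbb{E}^{\x,\y,\z}[H_1]+2\sum_{1\leq n<m\leq T_1}\mathbb{P}^{\x,\y,\z}(\mathcal{C}_n\cap\mathcal{C}_m)\]
and decompose $\mathcal{C}_n$ according to the common meeting site $\w\in S$. The Markov property at time $n$ factorises
\[\mathbb{P}^{\x,\y,\z}(\mathcal{C}_n\cap\mathcal{C}_m)=\sum_{\w\in S}\mathbb{P}^{\x,\y,\z}\bigl(X_n=Y_n=Z_n=\w,\,n<\theta_{hN}\bigr)\,\mathbb{P}^{\w,\w,\w}(\mathcal{C}_{m-n}'),\]
where $\mathcal{C}_{k}'$ is the analogue of $\mathcal{C}_k$ for walks freshly started at $\w$. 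Summing over $m>n$ and dropping the killing in the second factor (an upper bound), one arrives at
\[\mathbb{E}^{\x,\y,\z}[H_1^2]\leq \mathbb{E}^{\x,\y,\z}[H_1]\Bigl(1+2\sup_{\w\in S}G(\w)\Bigr),\qquad G(\w):=\sum_{k=1}^{T_1}\sum_{\w'\in S}\mathrm{deg}(\w')^3\,p_k(\w,\w')^3,\]
so it suffices to show $\sup_{\w\in S}G(\w)\leq c\bigl(\log\log(N)+\log^{1-\alpha}(N)\bigr)$.

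For each fixed $k$ I would use the ``supremum times $L^2$-sum'' bound
\[\sum_{\w'\in S}\mathrm{deg}(\w')^3 p_k(\w,\w')^3\leq 9\,\Bigl(\sup_{\w'\in S}p_k(\w,\w')\Bigr)\sum_{\w'}p_k(\w,\w')^2\mathrm{deg}(\w')=9\,\Bigl(\sup_{\w'\in S}p_k(\w,\w')\Bigr)\,p_{2k}(\w,\w),\]
where the equality uses the reversibility computation that already appears in Lemma \ref{l21}. Since $S\subseteq \mathbb{V}_N\setminus\mathbb{V}_{N/4}$, both heat-kernel factors fall in the regime of Lemma \ref{hku2}, which yields
\[\sup_{\w'\in S}p_k(\w,\w')\cdot p_{2k}(\w,\w)\leq\begin{cases}c\,k^{-1},&k<16\log^{3\alpha}(N),\\[2pt] c\,k^{-1}\log^{-\alpha}(N),&k\geq 16\log^{3\alpha}(N).\end{cases}\]
The key point I want to flag is that, in the long-time regime, \emph{both} heat-kernel factors inherit the $\log^{-\alpha/2}(N)$ improvement provided by the volume lower bound (one through the supremum over $\w'$, the other through the on-diagonal estimate obtained by reversibility); this combines into a $\log^{-\alpha}(N)$ gain, which is what makes the final bound feasible.

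Summing over $k$ in the two regimes is then routine: the short-time contribution is controlled by $c\sum_{k=1}^{16\log^{3\alpha}(N)}k^{-1}\leq c\log\log(N)$, while the long-time contribution is bounded by $c\log^{-\alpha}(N)\sum_{k=1}^{T_1}k^{-1}\leq c\log^{-\alpha}(N)\log(T_1)\leq c\log^{1-\alpha}(N)$, using $\log T_1\leq c\log N$. Adding these and absorbing the additive ``$1$'' (which is dominated by $\log\log(N)+\log^{1-\alpha}(N)$ for $N$ large, with small $N$ handled by adjusting the constant) delivers the claimed inequality. The main obstacle, such as it is, is bookkeeping across the crossover at $k\asymp\log^{3\alpha}(N)$: it is precisely this split between the ``pre-tooth'' regime (where the walker has not yet sampled the full height of the teeth) and the ``post-tooth'' regime (where the volume lower bound kicks in) that produces the additive $\log\log(N)+\log^{1-\alpha}(N)$ shape of the bound.
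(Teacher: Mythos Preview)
Your proposal is correct and follows essentially the same route as the paper: expand the square, apply the Markov property at the earlier collision time, drop the killing, and then split the time sum at $k\asymp\log^{3\alpha}(N)$ using Lemma \ref{hku2}. The only cosmetic difference is in how you bound $\sum_{\w'}p_k(\w,\w')^3$: you pull out one sup factor and use the reversibility identity $\sum_{\w'}p_k(\w,\w')^2\mathrm{deg}(\w')=p_{2k}(\w,\w)$, whereas the paper pulls out two sup factors and bounds the remaining $\sum_{\w'}p_k(\w,\w')$ by $1$; both yield the same $ck^{-1}$ (resp.\ $ck^{-1}\log^{-\alpha}(N)$) estimate.
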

\begin{proof}
    Note that
    \begin{equation}\label{decH}
        \mathbb{E}^{\x,\y,\z}[H_1^2]=\mathbb{E}^{\x,\y,\z}[H_1]+2\sum_{n=1}^{T_1-1}\sum_{k=n+1}^{T_1}\mathbb{P}^{\x,\y,\z}(\mathcal{C}_n\cap \mathcal{C}_k).
    \end{equation}
    Moreover, using the Markov property and the definition of the events $\mathcal{C}_i$ we readily see that (for $n+1\leq k\leq T_1$)
    \begin{align*}
        \mathbb{P}^{\x,\y,\z}(\mathcal{C}_n\cap \mathcal{C}_k)&\leq \mathbb{P}^{\x,\y,\z}(\mathcal{C}_n)\sup_{\vv\in \mathbb{V}_{N}\backslash\mathbb{V}_{N/2}}\mathbb{P}^{\vv,\vv,\vv}(\mathcal{C}_{k-n})=\mathbb{P}^{\x,\y,\z}(\mathcal{C}_n)\sup_{\vv\in \mathbb{V}_N\backslash\mathbb{V}_{N/2}}\sum_{\w \in \mathbb{V}_{N}\backslash\mathbb{V}_{N/2}}^{}p^{\mathbb{V}_{hN}}_{k-n}(\vv,\w)^3.
    \end{align*}
    However, $p^{\mathbb{V}_{hN}}_{k-n}(\vv,\w)\leq p^{}_{k-n}(\vv,\w)$ and so, summing over $k$, we obtain
    \begin{align*}
    \lefteqn{\sum_{k=n+1}^{T}\mathbb{P}^{\x,\y,\z}(\mathcal{C}_n\cap \mathcal{C}_k)}\\
    &\leq \mathbb{P}^{\x,\y,\z}(\mathcal{C}_n)\sup_{\vv\in \mathbb{V}_{N}\backslash\mathbb{V}_{N/2}}\sum_{\w \in \mathbb{V}_{N}\backslash\mathbb{V}_{N/2}}^{}\sum_{k=n+1}^{T_1}p_{k-n}(\vv,\w)^3\\
        &\leq \mathbb{P}^{\x,\y,\z}(\mathcal{C}_n)\sup_{\vv\in \mathbb{V}_{N}\backslash\mathbb{V}_{N/2}}\sum_{\w \in \mathbb{V}_{N}\backslash\mathbb{V}_{N/2}}^{}\sum_{k=1}^{T_1}p_{k}(\vv,\w)^3\\
        &=\mathbb{P}^{\x,\y,\z}(\mathcal{C}_n)\sup_{\vv\in \mathbb{V}_{N}\backslash\mathbb{V}_{N/2}}\sum_{\w \in \mathbb{V}_{N}\backslash\mathbb{V}_{N/2}}^{}\sum_{k=1}^{K_0}p_{k}(\vv,\w)^3\\
        &\qquad\qquad+\mathbb{P}^{\x,\y,\z}(\mathcal{C}_n)\sup_{\vv\in \mathbb{V}_{N}\backslash\mathbb{V}_{N/2}}\sum_{\w \in \mathbb{V}_{N}\backslash\mathbb{V}_{N/2}}^{}\sum_{k=K_0+1}^{T_1}p_{k}(\vv,\w)^3,
    \end{align*}
where we set $K_0\coloneqq \lfloor 16\log^{3\alpha}(N)\rfloor$. Now, it follows from Lemma \ref{hku2} that   
    \begin{align*}
    \sup_{\vv\in \mathbb{V}_{N}\backslash\mathbb{V}_{N/2}}\sum_{\w \in \mathbb{V}_{N}\backslash\mathbb{V}_{N/2}}^{}\sum_{k=1}^{K_0}p_{k}(\vv,\w)^3&\leq \sup_{\vv\in \mathbb{V}_{N}\backslash\mathbb{V}_{N/2}}^{}\sum_{k=1}^{K_0}\frac{c}{k}\sum_{\w \in \mathbb{V}_{N}\backslash\mathbb{V}_{N/2}}p_{k}(\vv,\w)\\
    &\leq\sum_{k=1}^{K_0}\frac{c}{k}\leq c\log(K_0)\leq c\log\log(N),
    \end{align*}
and also 
\begin{align*}
\lefteqn{\sup_{\vv\in \mathbb{V}_{N}\backslash\mathbb{V}_{N/2}}\sum_{\w \in \mathbb{V}_{N}\backslash\mathbb{V}_{N/2}}^{}\sum_{k=K_0+1}^{T_1}p_{k}(\vv,\w)^3}\\
&\leq \sup_{\vv\in \mathbb{V}_{N}\backslash\mathbb{V}_{N/2}}\sum_{k=K_0+1}^{T_1}\frac{c}{k\log^{\alpha}(N)}\sum_{\w \in \mathbb{V}_{N}\backslash\mathbb{V}_{N/2}}^{}p_{k}(\vv,\w)\leq \sum_{k=K_0+1}^{T_1}\frac{c}{k\log^{\alpha}(N)}\leq c\log^{1-\alpha}(N).
\end{align*}
Therefore we obtain 
\[\sum_{k=n+1}^{T_1}\mathbb{P}^{\x,\y,\z}(\mathcal{C}_n\cap \mathcal{C}_k)\leq c\mathbb{P}^{\x,\y,\z}(\mathcal{C}_n)\left(\log\log(N)+\log^{1-\alpha}(N)\right),\]
and summing over $n$ finally yields 
\[\sum_{n=1}^{T_1-1}\sum_{k=n+1}^{T_1}\mathbb{P}^{\x,\y,\z}(\mathcal{C}_n\cap \mathcal{C}_k)\leq c\mathbb{E}^{\x,\y,\z}[H_1]\left(\log\log(N)+\log^{1-\alpha}(N)\right).\]
Combined with our earlier decomposition of the second moment of $H_1$ given at (\ref{decH}), this gives the desired result.
\end{proof}

Plugging back into (\ref{secondmomentfirstmeeting}) the estimates obtained in the preceding two lemmas, we arrive at
\begin{equation*}
    \mathbb{P}^{\x,\y,\z}(H_1\geq 1)\geq \frac{c\log^{-\alpha}(N)}{\log\log(N)+\log^{1-\alpha}(N)},
\end{equation*}
thus establishing (\ref{localgoalsmallalpha}) and (\ref{localgoalalphaisone}) simultaneously.

\subsubsection{Subsequent meetings}\label{subseqM}

Here we wish to estimate, again by means of the second moment method, the number of collisions of the three random walkers, when started at the \textit{same} location in $\mathbb{V}_{N}\backslash\mathbb{V}_{N/2}$ (at a site satisfying the constraint appearing in the definition of $H_1$), prior to time $\theta_{hN}$. Denote by $H_2$ the number of collisions of the three random walkers in $\mathbb{V}_{2N}\backslash\mathbb{V}_{N/2}$ prior to time $\theta_{hN}$, i.e.
\begin{equation}\label{h2RV}
    H_2\coloneqq \sum_{n=1}^{T_2} \mathbbm{1}_{\mathcal{E}'_n},
    \end{equation}
    where $T_2\coloneqq \lfloor \delta N^2\log^{\alpha}(N)\rfloor$ for some $\delta\in (0,1)$ and, similarly to \eqref{curlyedef},
    \begin{equation}\label{curlyedashdef}
   \mathcal{E}'_n\coloneqq \{X_n=Y_n=Z_n\in \mathbb{V}_{2N}\backslash\mathbb{V}_{N/2},\:n<\theta_{hN}\}.
\end{equation}
(Note that, since we assumed that $h\geq 2$, it holds that $\mathbb{V}_{2N}\subseteq \mathbb{V}_{hN}$.) Recalling from the proof of Lemma \ref{expH} that
\begin{equation}\label{Sdef}
S= \left\{\w=(w,\ell)\in\mathbb{V}_{hN}:\:w\in (N/2,N],\:\ell\in[\varepsilon \log^{\alpha}(N/2),2\varepsilon \log^{\alpha}(N/2)]\right\},
\end{equation}
we wish to show that there is a constant $c\in (0,1)$ such that, when $\alpha<1$, we have
\begin{equation}\label{B1}
    \mathbb{P}^{\x,\x,\x}(H_2\geq c \log^{1-\alpha}(N))\geq c \text{ for any }\x\in S,
\end{equation}
and for $\alpha=1$, we have
\begin{equation}\label{B2}
    \mathbb{P}^{\x,\x,\x}(H_2\geq c \log\log(N))\geq c \text{ for any }\x\in S.
\end{equation}
We achieve this by bounding from above the second moment of $H_2$ and from below its first moment. For the second moment, we can proceed exactly as in the proof of Lemma \ref{secmomH} (with $\mathbb{V}_N\backslash\mathbb{V}_{N/2}$ replaced by $\mathbb{V}_{2N}\backslash\mathbb{V}_{N/2}$) to deduce that, for all $\x\in\mathbb{V}_{N}$,
\begin{equation}\label{h22}
\mathbb{E}^{\x,\x,\x}[H_2^2]\leq c\mathbb{E}^{\x,\x,\x}[H_2]\left(\log\log(N)+\log^{1-\alpha}(N)\right).
\end{equation}
As for the lower bound on the first moment of $H_2$, we distinguish between the two regimes $\alpha<1$ and $\alpha=1$.

\paragraph{Case $\alpha<1$.} 
In this part of the argument, we will apply the lower bound of Lemma \ref{lowerboundheatkernel}. Specifically, we fix $C=1$, and then use that result to note that there are constants $c_1$, $c_2$ and $d'\geq 1$ for which: if $\x\in\mathbb{V}_{2N}$, $\y\in\mathbb{V}_{2N}\backslash\mathbb{V}_{N/2}$ and $n\in [2C+16\log^{3\alpha}(2N),4N^2\log^\alpha(2N)]$ are such that $d(\x,\y)-n$ is even and $n\leq c_1n^{1/2}\log^{-\alpha/2}(2N)$, then
\begin{equation}\label{bta}
  p_n^{\mathbb{V}_{2d'N}}(\x,\y)\geq \frac{c_2}{n^{1/2}\log^{\alpha/2}(2N)}.  
\end{equation}
We reselect $h$ to be the maximum of the previous $h$ and $2d'$; our earlier arguments will still be valid for this choice, and so is the above upper bound with $\mathbb{V}_{2d'N}$ replaced by $\mathbb{V}_{hN}$. With this preparation in place, we bound the first moment of $H_2$ as follows: for $\x\in\mathbb{V}_N\backslash\mathbb{V}_{N/2}$,
\begin{equation}\label{firstmomsubseqM}
    \mathbb{E}^{\x,\x,\x}[H_2]=\sum_{n=1}^{T_2}\sum_{\w\in \mathbb{V}_{2N}\backslash\mathbb{V}_{N/2}}p_n^{\mathbb{V}_{hN}}(\x,\w)^3
    \geq \sum_{n=2+16\log^{3\alpha}(2N)}^{T_2}\sum_{\w\in B(\x,c_1n^{1/2}\log^{-\alpha/2}(2N))\backslash\mathbb{V}_{N/2}}p_n^{\mathbb{V}_{hN}}(\x,\w)^3,
\end{equation}
where the inequality is justified by the observation that if $\w\in B(\x,c_1n^{1/2}\log^{-\alpha/2}(2N))$ and $n\leq T_2$, then
\[d(\x,\w)\leq c_1T^{1/2}_2\log^{-\alpha/2}(2N)\leq c_1\delta^{1/2}N\]
so that, by taking $\delta$ small enough, we can ensure $B(\x,c_1n^{1/2}\log^{-\alpha/2}(2N))\subseteq \mathbb{V}_{2N}$. Applying \eqref{bta}, we obtain
\begin{align*}
    \sum_{\w\in B(\x,c_1n^{1/2}\log^{-\alpha/2}(2N))\backslash\mathbb{V}_{N/2}}p_n^{\mathbb{V}_{hN}}(\x,\w)^3&\geq \frac{c}{n^{3/2}\log^{3\alpha/2}(2N)}|B(\x,c_1n^{1/2}\log^{-\alpha/2}(2N))\backslash\mathbb{V}_{N/2}|\\
    &\geq \frac{c}{n^{3/2}\log^{3\alpha/2}(N)}c_1n^{1/2}\log^{-\alpha/2}(N)\log^{\alpha}(N)\\
    &=\frac{c}{n\log^{\alpha}(N)},
\end{align*}
where for the volume bound of the second inequality, we can argue as in the proof of Lemma \ref{hku2}. Substituting this into (\ref{firstmomsubseqM}) yields
\[\mathbb{E}^{\x,\x,\x}[H_2] \geq \frac{c}{\log^{\alpha}(N)}\sum_{n=2+16\log^{3\alpha}(N)}^{T_2}n^{-1}\geq \frac{c}{\log^{\alpha}(N)}\log(T_2)\geq \bar{c}\log^{1-\alpha}(N).\]
Since we have from \eqref{h22} that
\[\mathbb{E}^{\x,\x,\x}[H^2_2]\leq c\mathbb{E}^{\x,\x,\x}[H_2]\log^{1-\alpha}(N),\]
using Lemma \ref{PZI}, we arrive at
\[\mathbb{P}^{\x,\x,\x}(H_2\geq (\bar{c}/2)\log^{1-\alpha}(N))\geq \mathbb{P}^{\x,\x,\x}(H_2\geq \mathbb{E}^{\x,\x,\x}[H_2]/2)\geq c,\]
thus establishing (\ref{B1}) upon taking the smaller between $\bar{c}/2$ and $c$ in the last display.

\paragraph{Case $\alpha=1$.}
The argument in this case is some sense simpler, but depends on the assumption that the first meeting happens at a location $\x\in S$. In particular, if $\x=(x,u)$, we suppose that $x\in (N/2,N]$ and $u\in [\varepsilon\log^\alpha(N/2),2\varepsilon\log^\alpha(N/2)]$. Similarly to the $\alpha<1$ case, we can bound 
\begin{equation*}
    \mathbb{E}^{\x,\x,\x}[H_2]=\sum_{n=1}^{T_2}\sum_{\w\in \mathbb{V}_{2N}\backslash\mathbb{V}_{N/2}}p_n^{\mathbb{V}_{hN}}(\x,\w)^3
    \geq \sum_{n=1}^{c_1 \log^{2\alpha}(N/2)}\sum_{\w\in B(\x,\delta n^{1/2})\backslash\mathbb{V}_{N/2}}p_n^{\mathbb{V}_{hN}}(\x,\w)^3,
\end{equation*}
where $c_1$ is the constant of Lemma \ref{HKonedim}, which is a lower heat kernel bound for a random walk on an interval, killed on hitting an endpoint, and $\delta\in (0,(\varepsilon c_1^{-1/2})\wedge c_2]$, with $c_2$ also being taken from Lemma \ref{HKonedim}.
In particular, it is clear that if $n\leq c_1\log^{2\alpha}(N/2)$, then $\delta n^{1/2}\leq \varepsilon  \log^\alpha(N/2)$, and so the ball $B(\x,\delta n^{1/2})$ is entirely contained within the tooth upon which $\x$ is located. Since killing on exiting an interval around $\x$ within the tooth it is located only serves to reduce the heat kernel, as compared to killing on exiting $\mathbb{V}_{hN}$, it follows from Lemma \ref{HKonedim} that
\begin{align*}
    \sum_{n=1}^{c_1 \log^{2\alpha}(N/2)}\sum_{\w\in B(\x,\delta n^{1/2})\backslash\mathbb{V}_{N/2}}p_n^{\mathbb{V}_{hN}}(\x,\w)^3&\geq \sum_{n=1}^{c_1 \log^{2\alpha}(N/2)}\sum_{\w\in B(\x,\delta n^{1/2})}\frac{c}{n^{3/2}}\\
    &\geq\sum_{n=1}^{c_1 \log^{2\alpha}(N/2)}\frac{c}{n}\\
    &\geq\bar{c}\log\log(N).
\end{align*}
Moreover, we know from \eqref{h22} that
\[\mathbb{E}^{\x,\x,\x}[H^2_2]\leq c\mathbb{E}^{\x,\x,\x}[H_2]\log\log(N),\]
and so, applying Lemma \ref{PZI}, we conclude in this case that
\[\mathbb{P}^{\x,\x,\x}(H_2\geq (\bar{c}/2)\log\log(N))\geq \mathbb{P}^{\x,\x,\x}(H_2\geq \mathbb{E}^{\x,\x,\x}[H_2]/2)\geq c,\]
thus establishing (\ref{B2}) upon taking the smaller between $\bar{c}/2$ and $c$ in the last display.

\subsubsection{Establishing Theorem \ref{thmgrowth}}\label{finalsec}

Towards proving Theorem \ref{thmgrowth}, we first consider the number of  collisions of $X$, $Y$ and $Z$ prior to time $\theta_{hN}$. In particular, we define
\begin{equation}\label{hRV}
    H(N)\coloneqq \sum_{n=1}^{\infty}\mathbbm{1}_{\mathcal{E}''_n},
\end{equation}
where, similarly to \eqref{curlyedef} and \eqref{curlyedashdef}, we set
\[\mathcal{E}''_n\coloneqq \{X_n=Y_n=Z_n,\:n<\theta_{hN}\}.\]
For this random variable, we have the following lemma.

\begin{lemma}\label{hittimeversion} For $\alpha\in (0,1]$, it almost-surely holds that
\[\limsup_{n\rightarrow\infty}\frac{H(N)}{\log^{1-\alpha}(N)\vee \log\log(N)}>0,\]
\end{lemma}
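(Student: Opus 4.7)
The plan is to combine the moment estimates from Sections \ref{firstM} and \ref{subseqM} via the strong Markov property to obtain a single lower bound of the form
\[
\mathbb{P}^{\x,\y,\z}\bigl(H(hN)\geq c(\log^{1-\alpha}(N)\vee\log\log(N))\bigr)\geq \frac{c'}{L(N)},
\]
uniform over $\x,\y,\z\in\mathbb{V}_N$ at pairwise even distances, and then to feed this into a conditional Borel--Cantelli argument along the geometric subsequence $N_m=h^m$, in close analogy with the derivation of \eqref{bcapp}.

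For the first step, the second-moment argument of Section \ref{firstM} already gives $\mathbb{P}^{\x,\y,\z}(H_1\geq 1)\geq c/L(N)$, and on this event the first time $\sigma\leq T_1$ at which $\mathcal{C}_\sigma$ occurs is a stopping time with $X_\sigma=Y_\sigma=Z_\sigma\in S$ (recall \eqref{Sdef}). Applying the strong Markov property at $\sigma$ and invoking \eqref{B1} (when $\alpha<1$) or \eqref{B2} (when $\alpha=1$), I deduce that, conditional on $\{H_1\geq 1\}$, with probability at least a constant the walkers meet at least $c(\log^{1-\alpha}(N)\vee\log\log(N))$ further times while still inside $\mathbb{V}_{hN}$; the restriction $n<\theta_{hN}$ is built into the definition of $\mathcal{E}'_n$ at \eqref{curlyedashdef} and carries through after time-shifting by $\sigma$. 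Each such meeting is counted by $H(hN)$, so chaining the two estimates produces the displayed bound.

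For the second step, fix $h$ as above (possibly enlarged to an integer $\geq 2$) and set
\[
\mathcal{G}_m\coloneqq \bigl\{H(h^{m+1})-H(h^m)\geq c(\log^{1-\alpha}(h^m)\vee\log\log(h^m))\bigr\}.
\]
Applying the strong Markov property at the stopping time $\theta_{h^m}$, together with the displayed lower bound at scale $N=h^m$, gives
\[
\mathbb{P}^{\0,\0,\0}(\mathcal{G}_m\mid\mathcal{F}_{\theta_{h^m}})\geq \frac{c''}{L(h^m)}
\]
for all large $m$. This application is legitimate because $X_{\theta_{h^m}},Y_{\theta_{h^m}},Z_{\theta_{h^m}}\in\mathbb{V}_{h^m+1}\subseteq \mathbb{V}_{2h^m}$, and the required parity condition on pairwise distances is automatic for all $n$ (all three walkers start from $\0$ and move by unit steps). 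Since $\sum_m 1/L(h^m)$ diverges in both regimes, behaving like $\sum 1/m$ when $\alpha<1$ and like $\sum 1/(m\log m)$ when $\alpha=1$, the conditional Borel--Cantelli lemma (Lemma \ref{BCL}) implies that $\mathcal{G}_m$ occurs for infinitely many $m$, $\mathbb{P}^{\0,\0,\0}$-almost surely, and the claimed $\limsup$ follows along the subsequence $N=h^{m+1}$.

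The only real point of care, rather than a serious obstacle, is the requirement in \eqref{B2} that the starting configuration of the three walkers lie specifically in $S$ and not merely in $\mathbb{V}_N\setminus\mathbb{V}_{N/2}$: this is precisely why $H_1$ was defined via the restrictive event $\mathcal{C}_n$ rather than the simpler $\mathcal{E}_n$, and it is what forces the $\log\log(N)$ factor in $L(N)$ when $\alpha=1$. Beyond this bookkeeping, the argument is a routine iteration of the preceding estimates.
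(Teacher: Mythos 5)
Your proposal is correct and follows essentially the same route as the paper: the paper's proof likewise chains $\mathbb{P}^{\x,\y,\z}(H_1\geq 1)\geq c/L(N)$ with the strong Markov property at the first collision time $\sigma$ (using that $X_\sigma\in S$ on $\{H_1\geq1\}$) and the bounds \eqref{B1}/\eqref{B2} for $H_2$, obtaining $\mathbb{P}^{\x,\y,\z}\left(H(N)\geq\eta(\log^{1-\alpha}(N)\vee\log\log(N))\right)\geq c/L(N)$ uniformly over starting points in $\mathbb{V}_N$ at even pairwise distances, and then concludes by the conditional Borel--Cantelli lemma along $N=h^m$ exactly as for \eqref{bcapp}. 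The only quibbles are bookkeeping: the post-$\sigma$ collisions delivered by $H_2$ already occur before $\theta_{hN}$ (since $\sigma<\theta_{hN}$ on $\{H_1\geq1\}$), so they are counted by $H(N)$ rather than $H(hN)$; and in the Borel--Cantelli step it is cleaner to take the $m$-th event to be ``at least $\eta(\cdot)$ collisions during $[\theta_{h^m},\theta_{h^{m+1}})$'' rather than the increment $H(h^{m+1})-H(h^m)$, because the strong Markov property applied at $\theta_{h^m}$ controls collisions from time $\theta_{h^m}$ onwards, not only those occurring after $\theta_{h^{m+1}}$ --- both points are easily repaired and do not affect the conclusion.
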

\begin{proof}
We start by considering the case $\alpha<1$. For $\x,\y,\z\in \mathbb{V}_N$ (with $d(\x,\y)$, $d(\y,\z)$ and $d(\x,\z)$ even) and $\eta$ a constant to be determined later, we bound
\begin{align*}
    \mathbb{P}^{\x,\y,\z}(H(N)\geq \eta\log^{1-\alpha}(N))&\geq \mathbb{P}^{\x,\y,\z}(H_1\geq 1,\: H(N)\geq \eta\log^{1-\alpha}(N))\\
    &=\mathbb{E}^{\x,\y,\z}\left[\mathbbm{1}_{\{\sigma\leq T_1\wedge \theta_{hN},\:X_\sigma\in S\}}\mathbb{P}^{\x,\y,\z}\left(\sum_{n=\sigma}^{\infty}\mathbbm{1}_{\mathcal{E}''_n}\geq \eta\log^{1-\alpha}(N)\:\vline\: \mathcal{F}_{\sigma}\right)\right],
\end{align*}
where $T_1$, $H_1$ and $S$ were defined at \eqref{t1def}, \eqref{h1RV} and \eqref{Sdef}, respectively, and $\sigma$ is the first time at which the three random walkers collide (so that, in particular, $H_1\geq 1$ if, and only if, $\sigma\leq T_1\wedge \theta_{hN}$ and $X_\sigma\in S$). Now, on the event $\{\sigma\leq T_1\wedge \theta_{hN},\:X_\sigma\in S\}$, we have 
\begin{align*}
\mathbb{P}^{\x,\y,\z}\left(\sum_{n=\sigma}^{\infty}\mathbbm{1}_{\mathcal{E}''_n}\geq \eta\log^{1-\alpha}(N)\:\vline\: \mathcal{F}_{\sigma}\right)&\geq \mathbb{P}^{\x,\y,\z}\left(\sum_{n=\sigma}^{\sigma+T_2}\mathbbm{1}_{\mathcal{E}''_n}\geq \eta\log^{1-\alpha}(N)\:\vline\: \mathcal{F}_{\sigma}\right)\\
&\geq  \inf_{\vv\in S}\mathbb{P}^{\vv,\vv,\vv}\left(H_2\geq \eta\log^{1-\alpha}(N)\right),
\end{align*}
where $H_2$ was defined in (\ref{h2RV}), and, by (\ref{B1}), provided we take $\eta$ small enough, the right-hand side here is bounded below by a positive constant $c$. Moreover, it follows from (\ref{localgoalsmallalpha}) that
\[\mathbb{P}^{\x,\y,\z}(\sigma\leq T_1\wedge \theta_{hN},\:X_\sigma\in S)=\mathbb{P}^{\x,\y,\z}(H_1\geq 1)\geq \frac{c}{\log(N)}.\]
Hence we arrive at
\begin{equation*}
\mathbb{P}^{\x,\y,\z}(H(N)\geq \eta\log^{1-\alpha}(N))\geq \frac{c}{\log(N)}.
\end{equation*}
As for the case $\alpha=1$, arguing as above yields
\begin{align*}
\mathbb{P}^{\x,\y,\z}(H(N)\geq \eta \log\log(N))&\geq \mathbb{P}^{\x,\y,\z}(H_1\geq 1)\inf_{\vv\in S}\mathbb{P}^{\vv,\vv,\vv}\left(H_2\geq \eta \log\log(N)\right)\nonumber\\
&\geq \frac{c}{\log(N)\log\log(N)},
\end{align*}
where for the second inequality we have applied \eqref{localgoalalphaisone} and \eqref{B2}.

Completing the proof is now a straightforward application of the conditional Borel-Cantelli lemma, similar to that used to deduce \eqref{bcapp}. 
\end{proof}
Finally, we give the proof of Theorem \ref{thmgrowth}.
\begin{proof}[Proof of Theorem \ref{thmgrowth}]
Given Lemma \ref{hittimeversion}, it will suffice to compare 
$\theta_{hN}$ with a deterministic time. In particular, from the commute time identity (see \cite[Proposition 10.6]{LPW}, for example), we have that
\[\mathbb{P}^{\mathbf{0},\mathbf{0},\mathbf{0}}\left(\theta_{hN}>N^4\right)\leq \mathbb{P}^{\mathbf{0}}\left(\theta_{hN}^X>N^4\right)\leq\frac{3R(\mathbf{0},\mathbb{V}_{hN}^c)|\mathbb{V}_{hN}|}{N^4}\leq \frac{cN^2\log^\alpha(N)}{N^4}.\]
Since the upper bound here is summable, the standard Borel-Cantelli lemma gives that $\theta_{hN}\leq N^4$ eventually, almost-surely. Hence, recalling the definition of the random variable $H(N)$ given at (\ref{hRV}), we see that, eventually, almost-surely
\[C_N=\sum_{n=1}^{N}\mathbbm{1}_{\{X_n=Y_n=Z_n\}}\geq \sum_{n=1}^{\infty}\mathbbm{1}_{\{X_n=Y_n=Z_n, n<\theta_{hN^{1/4}}\}}=H(N^{1/4}),\]
and so we obtain the result from Lemma \ref{hittimeversion}.
\end{proof}

\appendix 

\section{Appendix}

Here we collect some elementary results that were applied in the article. We start by recalling a general version of the Borel-Cantelli lemma, which we used to establish the infinite triple collision property when $\alpha\leq 1$.

\begin{lemma}[{\cite[Corollary 7.20]{Kall}}]\label{BCL}
 Let $(\mathcal{F}_n)_{n\in\mathbb{N}_0}$  be a filtration and  $(A_n)_{n\in\mathbb{N}}$ a sequence of events with $A_n\in \mathcal{F}_n$. Then, almost-surely,
\[\left\{A_n \text{ infinitely often}\right\}=\left\{\sum_{n\geq 1}\mathbb{P}(A_n|\mathcal{F}_{n-1})=\infty\right\}.\]
\end{lemma}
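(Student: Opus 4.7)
The plan is to analyse the compensated martingale $M_n := \sum_{k=1}^n(\mathbbm{1}_{A_k} - p_k)$, where $p_k := \mathbb{P}(A_k \mid \mathcal{F}_{k-1})$. Since $|\mathbbm{1}_{A_k} - p_k| \leq 1$ and the increments are centred given $\mathcal{F}_{k-1}$, $(M_n)$ is a martingale with bounded increments, whose predictable quadratic variation satisfies $\langle M\rangle_n = \sum_{k=1}^n p_k(1-p_k) \leq \sum_{k=1}^n p_k$. The key identity
\[\sum_{k=1}^n \mathbbm{1}_{A_k} = M_n + \sum_{k=1}^n p_k\]
then reduces the lemma to controlling the fluctuations of $M_n$ on the two events $B^- := \{\sum_{k\geq 1} p_k < \infty\}$ and $B^+ := \{\sum_{k \geq 1} p_k = \infty\}$, which are complementary.

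First, on $B^-$ I would note that $\langle M \rangle_\infty < \infty$. Applying the $L^2$ martingale convergence theorem to the martingale stopped when $\sum p_k$ first exceeds an arbitrary finite level $L$, and then letting $L \to \infty$, one deduces that $M_n$ converges almost-surely on $B^-$. Combined with the identity above, this forces $\sum_{k \geq 1} \mathbbm{1}_{A_k} < \infty$ almost-surely on $B^-$, i.e.\ $A_n$ occurs only finitely often there.

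For the opposite inclusion, on $B^+$ I would introduce the stopping times $\tau_N := \inf\{n : \sum_{k=1}^n p_k \geq N\}$, which are finite almost-surely on $B^+$ and satisfy $\sum_{k=1}^{\tau_N} p_k \leq N + 1$ since each $p_k \leq 1$. Optional stopping gives $\mathbb{E}[M_{\tau_N}^2] \leq \mathbb{E}[\langle M\rangle_{\tau_N}] \leq N + 1$, so Chebyshev yields $\mathbb{P}(|M_{\tau_{j^2}}| > j^2/2) \leq 4(j^2+1)/j^4$, which is summable in $j$. By the unconditional Borel--Cantelli lemma, $|M_{\tau_{j^2}}| \leq j^2/2$ eventually almost-surely on $B^+$, whence
\[\sum_{k=1}^{\tau_{j^2}} \mathbbm{1}_{A_k} = M_{\tau_{j^2}} + \sum_{k=1}^{\tau_{j^2}} p_k \geq j^2 - j^2/2 \to \infty,\]
forcing $\sum_{k \geq 1} \mathbbm{1}_{A_k} = \infty$ on $B^+$, as required.

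The main obstacle is the divergent case: one must extract an almost-sure (not merely in-probability) conclusion from the $L^2$ bound on the stopped martingale. The trick above is to work along the sparse subsequence $N_j = j^2$, so that the Chebyshev tails become summable and one extra Borel--Cantelli argument suffices, rather than trying to apply a direct strong law for martingales with bounded increments. The convergent case, by contrast, is a textbook application of martingale convergence once one observes $\langle M \rangle_\infty \leq \sum p_k < \infty$.
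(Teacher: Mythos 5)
Your proof is correct. Note that the paper does not actually prove this lemma: it is quoted verbatim from Kallenberg \cite[Corollary 7.20]{Kall}, so there is no in-paper argument to compare against. Your two-sided martingale argument --- $L^2$-bounded convergence of the stopped compensated martingale $M_{n\wedge\sigma_L}$ on $\{\sum_k p_k<\infty\}$, and the Chebyshev--Borel--Cantelli estimate along the stopping times $\tau_{j^2}$ on $\{\sum_k p_k=\infty\}$ --- is essentially the standard proof of this extended Borel--Cantelli lemma, and all the estimates check out (in particular $\langle M\rangle_{\tau_N}\leq N+1$ because the final increment contributes at most $1$, and $\sum_j 4(j^2+1)/j^4<\infty$). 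The only point worth making explicit is that on $\{\tau_N=\infty\}$ the quantity $M_{\tau_N}$ should be read as $\lim_n M_{\tau_N\wedge n}$, which exists almost-surely and satisfies $\mathbb{E}[M_{\tau_N}^2]\leq N+1$ by Fatou applied to $\mathbb{E}[M_{\tau_N\wedge n}^2]=\mathbb{E}[\langle M\rangle_{\tau_N\wedge n}]$; with that reading, the argument is complete.
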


Next, we recall the Paley–Zygmund inequality, a proof of which is included for completeness.

\begin{lemma}[{\cite[Lemma 4.1]{Kall}}]\label{PZI}
    Let $X$ be a non-negative random variable with finite mean and $\eta\in (0,1)$. Then
    \[\mathbb{P}\left(X\geq\eta \mathbb{E}[X]\right)\geq (1-\eta)^2\frac{\mathbb{E}[X]^2}{\mathbb{E}[X^2]}.\]
\end{lemma}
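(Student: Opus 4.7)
The plan is to prove the Paley--Zygmund inequality in the standard way, by splitting the expectation of $X$ according to whether $X$ is above or below the threshold $\eta\mathbb{E}[X]$, and then applying Cauchy--Schwarz to the upper piece. The cases $\mathbb{E}[X]=0$ and $\mathbb{E}[X^2]=\infty$ can be disposed of immediately (trivial or vacuous), so I assume $0<\mathbb{E}[X]<\infty$ and $\mathbb{E}[X^2]<\infty$.

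First I would write
\[
\mathbb{E}[X]=\mathbb{E}\bigl[X\,\mathbbm{1}_{\{X<\eta\mathbb{E}[X]\}}\bigr]+\mathbb{E}\bigl[X\,\mathbbm{1}_{\{X\geq\eta\mathbb{E}[X]\}}\bigr].
\]
Since $X$ is non-negative, the first term is bounded above by $\eta\mathbb{E}[X]$. Rearranging gives
\[
(1-\eta)\mathbb{E}[X]\leq \mathbb{E}\bigl[X\,\mathbbm{1}_{\{X\geq \eta\mathbb{E}[X]\}}\bigr].
\]

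Next I would apply Cauchy--Schwarz to the right-hand side:
\[
\mathbb{E}\bigl[X\,\mathbbm{1}_{\{X\geq\eta\mathbb{E}[X]\}}\bigr]\leq \sqrt{\mathbb{E}[X^2]}\cdot\sqrt{\mathbb{P}\bigl(X\geq\eta\mathbb{E}[X]\bigr)}.
\]
Combining the two displays, squaring, and dividing through by $\mathbb{E}[X^2]$ yields the claimed inequality
\[
\mathbb{P}\bigl(X\geq\eta\mathbb{E}[X]\bigr)\geq (1-\eta)^2\frac{\mathbb{E}[X]^2}{\mathbb{E}[X^2]}.
\]

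There is no real obstacle here; the only subtlety is the degenerate cases mentioned above, which are handled in one line. The proof is a two-step manipulation and should fit in a short paragraph.
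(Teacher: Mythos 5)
Your proof is correct and is essentially identical to the paper's: both decompose $\mathbb{E}[X]$ at the threshold $\eta\mathbb{E}[X]$, bound the lower piece by $\eta\mathbb{E}[X]$, and apply Cauchy--Schwarz to the upper piece before rearranging and squaring. The brief remark on the degenerate cases is a harmless addition.
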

\begin{proof}
    Using the Cauchy–Schwarz inequality, we can bound
    \begin{equation*}
        \mathbb{E}[X]\leq \eta \mathbb{E}[X]+\mathbb{E}[X\mathbbm{1}_{\{X\geq \eta\mathbb{E}[X]\}}]\leq \eta \mathbb{E}[X] + \mathbb{E}[X^2]^{1/2}\mathbb{P}(X\geq \eta \mathbb{E}[X])^{1/2},
    \end{equation*}
    whence $\mathbb{E}[X](1-\eta)\leq \mathbb{E}[X^2]^{1/2}\mathbb{P}(X\geq\eta \mathbb{E}[X])^{1/2}$; now divide by $\mathbb{E}[X^2]^{1/2}$ and take squares.
\end{proof}

In estimating the collision growth rate in the case $\alpha=1$ in Section \ref{subseqM}, we applied an estimate on the transition density of a killed random walk on an interval. We complete the appendix by stating and proving this. Fix an integer $L\geq 2$. Let $X^L$ be a nearest-neighbour random walk on $\{0,\dots, L\}$, killed on hitting $\{0,L\}$, and $(q^L_n(x,y))_{x,y\in\{0,1,\dots,L\},\:n\geq 0}$ be its transition density, defined analogously to \eqref{killedtddef}. We then have the following result. (Since the proof is similar to, but easier than, that of Lemma \ref{lowerboundheatkernel}, we will be brief with the details.)

\begin{lemma}\label{HKonedim}
    Fix $\varepsilon \in (0,1/2)$. There exist constants $c_1,c_2,c_3\in(0,\infty)$ such that: for all $L\geq 2$, $1\leq n\leq c_1 L^2$, $x\in \{\varepsilon L,\dots, (1-\varepsilon)L\}$, and $y$ satisfying $|x-y|\leq c_2\sqrt{n}$ and $|x-y|-n$ is even,
    \[q^L_n(x,y)\geq \frac{c_3}{\sqrt{n}}.\]
\end{lemma}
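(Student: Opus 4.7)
The plan is to adapt the strategy used in the proof of Lemma \ref{lowerboundheatkernel}, exploiting the much simpler geometry of the interval. The key observation is that if $x\in\{\varepsilon L,\dots,(1-\varepsilon)L\}$ and we set $r=c_0\sqrt{n}$ with $c_0$ chosen so that $c_0\sqrt{c_1}<\varepsilon$, then for $n\leq c_1L^2$ the ball $B(x,r)=\{x-r,\dots,x+r\}$ is strictly contained in $\{1,\dots,L-1\}$. Since a walk on the full integer line killed on exiting $B(x,r)$ produces a smaller (sub-)transition density than the walk on $\{0,\dots,L\}$ killed on $\{0,L\}$, one has $q^L_n(x,y)\geq p_n^{B(x,r)}(x,y)$ for every $y\in B(x,r)$. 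Thus it suffices to prove the bound $p_n^{B(x,r)}(x,y)\geq c_3/\sqrt{n}$.

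First I would establish the on-diagonal estimate $p_{2n}^{B(x,r)}(x,x)\geq c/\sqrt{n}$ by reproducing the template from Lemma \ref{lowerboundheatkernel}. Here the volume is $V(x,r)=2r+1$, the effective resistance from any point in $B(x,r)$ to $B(x,r)^c$ is at most $r$, and, since $x$ is the midpoint of the interval, $R(x,B(x,r)^c)=r/2$ (two length-$r$ resistors in parallel). Plugging these into \eqref{etu} gives $\sup_y\mathbb{E}^y[\tau_{x,r}]\leq Cr^2$, and the occupation-density identity \eqref{etlb} together with the resistance triangle inequality gives $g(y)\geq cr$ for $y\in B(x,r/16)$ and hence $\mathbb{E}^x[\tau_{x,r}]\geq cr^2$. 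The Markov-at-time-$t$ argument then produces $\mathbb{P}^x(\tau_{x,r}>t)\geq c$ for $t\leq c'r^2$, and the Cauchy–Schwarz step leading to \eqref{odlower} yields $p_{2n}^{B(x,r)}(x,x)\geq c/V(x,r)\geq c/\sqrt{n}$, provided $n\leq c'r^2=c'c_0^2 n$ (automatic for $c_0$ large enough).

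For the near-diagonal statement, I would copy verbatim the endgame of the proof of Lemma \ref{lowerboundheatkernel}. Choose $B=B(x,(c_0+c_2)\sqrt{n})$ (still inside $\{1,\dots,L-1\}$ when $c_1$ is small) and apply the decomposition \eqref{ilb}, reducing matters to $\mathbb{P}^x(\tau_{\{y\}^c}\leq n\wedge\tau_B)\geq 1/2$. The commute-time identity bounds $\mathbb{P}^x(\tau_{\{y\}^c}\wedge\tau_B>n)\leq 3V(x,(c_0+c_2)\sqrt{n})|x-y|/n\leq Cc_2(c_0+c_2)$, and \cite[Exercise 2.36]{LP} bounds $\mathbb{P}^x(\tau_{\{y\}^c}>\tau_B)\leq |x-y|/R(x,B^c)\leq 8c_2/c_0$; picking $c_2$ sufficiently small relative to $c_0$ makes both terms $<1/4$. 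Finally, the on-diagonal bound applies at $y$ as well (since $y$ is at distance $\geq(\varepsilon/2)L$ from $\{0,L\}$ once $c_1,c_2$ are small), giving $p_{2\lfloor n/2\rfloor}^B(y,y)\geq c/\sqrt{n}$ and thereby the claim.

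The only real obstacle is bookkeeping: the constants $c_0,c_1,c_2,\varepsilon$ must be tuned consistently so that every containment $B(x,r)\subseteq\{1,\dots,L-1\}$ holds, the parity constraint is respected, and the two probability bounds just above line up. Because there are no logarithmic corrections to chase as in Lemma \ref{lowerboundheatkernel}, no genuinely new analytic ingredient is required, which is why the authors are able to dispatch this result briefly.
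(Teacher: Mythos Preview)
Your proposal is correct and follows essentially the same route as the paper: both adapt the template of Lemma~\ref{lowerboundheatkernel} to the interval, using an exit-time lower bound to get the on-diagonal estimate and then the hitting-time decomposition~\eqref{ilb} together with commute-time and resistance bounds for the near-diagonal case. The only cosmetic difference is that the paper short-circuits the resistance machinery for the exit-time moment by applying the optional stopping theorem directly (since the killed walk on $B(y,m)$ coincides with simple random walk on $\mathbb{Z}$ up to exit, one gets $\mathbb{E}_y^L\tau^L_{B(y,m)}=(m+1)^2$ exactly), and it centres the on-diagonal ball at $y$ rather than $x$; neither change affects the substance of the argument.
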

\begin{proof}
First, let $m$ be a positive integer and suppose $y\in \{m+1,m+2,\dots,L-(m+1)\}$. It is then the case that, up to the time it exits $B(y,m)$, $\tau^L_{B(y,m)}$ say, $X^L$ behaves the same as a simple random walk on $\mathbb{Z}$. (Here, we write $B(y,m)$ for the ball of radius $m$ about $y$, defined similarly to \eqref{vdef}.) Thus we readily compute that
\[\mathbb{E}_z^L\tau^L_{B(y,m)}\leq \mathbb{E}_y^L\tau^L_{B(y,m)}=(m+1)^2,\qquad \forall z\in B(y,m),\]
where $\mathbb{E}_z^L$ gives the expectation with respect to the law of $X^L$ when this process is started from $z$, and the equality is a straightforward application of the optimal stopping theorem. Following the argument leading to \eqref{exitprob} and \eqref{odlower}, we deduce from this that
\[\mathbb{P}^L_y\left(\tau^L_{B(y,m)}\geq m^2/2\right)\geq \frac{1}{2},\]
where $\mathbb{P}^L_y$ is the law of $X^L$ started from $y$, and also
\[q^{L,B(y,m)}_{m^2}(y,y)\geq \frac{c}{m},\]
where $q^{L,B(y,m)}$ is the transition density of $X^L$ killed on exiting $B(y,m)$.

Now, let $x$, $y$ and $n$ be as in the statement of the lemma, and set $m=\sqrt{2n}$. We then have that 
\[B(y,m)\subseteq B(x,m+c_2\sqrt{n})\subseteq B(x,2m)\]
for $c_2$ chosen suitably small. Moreover, since $m=\sqrt{2n}\leq \sqrt{2c_1}L$ and $x\in \{\varepsilon L,\dots, (1-\varepsilon)L\}$, we can ensure that $B(x,2m)\subseteq \{1,\dots,L-1\}$ by choosing $c_1$ small. Hence, similarly to \eqref{ilb}, we have that
\[q^L_n(x,y)\geq q^{L,B(x,2m)}_n(x,y)\geq \mathbb{P}^L_x\left(\tau^L_{\{y\}^c}\leq n,\:\tau^{L}_{B(x,2m)}>\tau^L_{\{y\}^c}\right)q^{L,B(y,m)}_{2n}(y,y).\]
By our choice of $m$, it holds that
\[q^{B(y,m)}_{2n}(y,y)=q^{B(y,m)}_{m^2}(y,y),\]
and we have from the conclusion of the first paragraph that this is bounded below by $c/m=c/(2\sqrt{n})$. Moreover, applying the bounds used in the relevant part of the proof of Lemma \ref{lowerboundheatkernel}, we have that
\begin{align*}
    \mathbb{P}^L_x\left(\tau^L_{\{y\}^c}\leq n,\:\tau^{L}_{B(x,2m)}>\tau^L_{\{y\}^c}\right)&=1-\mathbb{P}^L_x\left(\tau^L_{\{y\}^c}>n,\: \tau^L_{B(x,2m)}> \tau^L_{\{y\}^c}\right)-\mathbb{P}^L_x\left(\tau^L_{\{y\}^c}>\tau^{L}_{B(x,2m)}\right)\\ 
    &\geq 1-\mathbb{P}^L_x\left(\tau^L_{\{y\}^c\cup B(x,2m)}\geq n\right)-\mathbb{P}^L_x\left(\tau^L_{\{y\}^c}>\tau^{L}_{B(x,2m)}\right)\\
    &\geq 1-\frac{2c_2\sqrt{n}(4m+1)}{n}-\frac{c_2\sqrt{n}}{m+1},
\end{align*}
which can be made greater than $1/2$ by taking $c_2$ sufficiently small. The result follows.
\end{proof}

\bibliographystyle{amsplain}
\bibliography{collision}

\end{document}